\newcommand{\Om} {\Omega}
\newcommand {\ep} {\epsilon}
\newcommand {\om} {\omega}
\newcommand {\gm} {\gamma}
\newcommand {\ii} {\infty}
\newcommand {\dt} {\delta}
\newcommand {\al} {\alpha}
\newcommand {\bt} {\beta}
\newcommand {\lb} {\lambda}
\newcommand {\Lb} {\Lambda}
\newcommand {\ol} {\overline}
\newcommand {\sm} {\setminus}
\newcommand {\su} {\subset}
\newcommand {\wt} {\widetilde}
\newcommand {\wh} {\widehat}
\newcommand {\cal} {\mathcal}
\newcommand {\mb} {\mathbf}
\newtheorem{teo}{Theorem}[section]
\newtheorem{pro}{Proposition}[section]
\newtheorem{cor}{Corollary}[section]
\newtheorem{lm}{Lemma}[section]
\theoremstyle{definition}
\newtheorem{rem}{Remark}[section]
\title{Almost uniform and strong convergences in ergodic theorems for symmetric spaces}
\keywords{Symmetric function space, Dunford-Schwartz operator, individual ergodic theorem, almost uniform convergence, mean ergodic theorem}
\subjclass[2010]{46E30, 37A30, 47A35}
\begin{document}
\date{February 19, 2018}

\begin{abstract}
Let $(\Om,\mu)$ be a $\sigma$-finite measure space, and let $X\su L^1(\Om)+ L^\ii(\Om)$ be a fully symmetric space of measurable functions on $(\Om,\mu)$. If $\mu(\Om)=\ii$, necessary and sufficient conditions are given for almost uniform convergence
in $X$ (in Egorov's sense) of Ces\`aro averages $M_n(T)(f)=\frac1n \sum_{k = 0}^ {n-1} T^k(f)$ for all Dunford-Schwartz operators $T$ in $L^1(\Om)+ L^\ii(\Om)$ and any $f\in X$. Besides, it is proved that  the averages
$M_n(T)$ converge strongly in $X$ for each Dunford-Schwartz operator $T$ in $L^1(\Om)+ L^\ii(\Om)$
if and only if $X$ has order continuous norm and $L^1(\Om)$ is not contained in $X$.
\end{abstract}

\author{V. Chilin}
\address{National University of Uzbekistan\\
Tashkent, 100174, Uzbekistan}
\email{vladimirchil@gmail.com; chilin@ucd.uz}
\author{S. Litvinov}
\address{Pennsylvania State University \\
76 University Drive \\ Hazleton, PA 18202, USA}
\email{snl2@psu.edu}

\maketitle

\section{Introduction}
Let $(\Om,\cal A,\mu)$ be a complete $\sigma$-finite measure space.
Denote  by $L^0 =  L^0(\Omega)$ the algebra of equivalence classes of almost everywhere (a.e.)
finite real-valued measurable functions on $\Om$. Let $L^p=L^p(\Om)\su L^0$, $1\leq p\leq \ii$, be the
$L^p$-space equipped with the standard norm $\| \cdot \|_p$.

Let $T: L^1+ L^\ii \to  L^1+ L^\ii$ be a Dunford-Schwartz operator (writing $T\in DS$), that is, $T$ is linear and
$$
\| T(f)\|_1\leq \| f\|_1 \text{\ \ for all\ \ } f\in  L^1 \text{ \ \ and \ \ } \| T(f)\|_\ii\leq \| f\|_\ii \text{\ \ for all\ \ }
f\in \ L^\ii.
$$
 If $T\in DS$ is positive, that is, $T(f)\ge 0$ whenever $f\ge 0$, then we shall write $T\in DS^+$.

Dunford-Schwartz individual ergodic theorem  states that for every  $T \in DS$ and $f \in L^p$,
$1 \leq p <\ii$, the averages
\begin{equation}\label{e11}
M_n(T)(f)=\frac1n \sum_{k = 0}^ {n-1} T^k(f)
\end{equation}
converge a.e. to some $\widehat f\in L^p$  (see, for example, \cite[Ch.VIII, \S 6, Theorem VIII.6.6]{ds}).

In the case $\mu(\Om) <\ii$, it follows from Egorov's theorem that a.e. convergence coincides with the almost uniform (a.u.) convergence, thus, Dunford-Schwartz individual ergodic theorem asserts a.u. convergence of the averages $M_n(T)(f)$
for each $f \in L^p$, $1\leq p <\ii$, and all $T\in DS$.
If $\mu(\Om) = \ii$, then it is clear that a.u. convergence (in Egorov's sense) is generally stronger than a.e. convergence.

If $\mu(\Om)=\ii$, then there is the problem of describing the largest subspace of $ L^1+ L^\ii$ for which a.u. convergence in Dunford-Schwartz individual ergodic theorem holds. Let
\begin{equation}\label{e12}
\cal R_\mu=  \{f \in  L^1 +L^\ii:\mu\{|f|> \lambda\}<\ii \ \ \text{for all} \ \ \lambda>0\}.
\end{equation}
In Section 3 we prove (Theorem \ref{t31}) that for each $f\in\cal R_\mu$ and any $T \in DS$ the averages $M_n(T)(f)$
converge a.u. to some $\wh f\in\cal R_\mu$.  It should be pointed out that, by virtue of Lemma \ref{l31},
the proof of a.u. convergence in Dunford-Schwartz ergodic theorem is noticeably simpler than the proof of
a.e. convergence. We also show that if $(\Om,\cal A,\mu)$ has finitely many atoms
or if its atoms have equal measures, then $\cal R_\mu$ is the largest subspace of $L^1+ L^\ii$ for which
the convergence takes place: if $f\in (L^1+ L^\ii) \sm \cal R_\mu$, then there exists $T\in DS $ such that the sequence
$\{M_n(T)(f)\}$ does not converge a.u. (Theorem \ref{t34}).

A well-known mean ergodic theorem asserts (see, for example, \cite [Ch.VIII, \S 5] {ds}) that the averages
$M_n(T)$ converge strongly in a reflexive Banach space $(X,\|\cdot\|_{X})$
for every linear contraction $T$ of $X$, that is, given $x\in X$, there exists $\widehat x\in X$ such that
$$
\bigg \|\frac1n\sum_{k=0}^{n-1} T^k(x)-\widehat x\bigg\|_X \to 0 \ \ \text{as} \ \  n \to \ii.
$$
Important examples illustrating this ergodic theorem are the reflexive spaces $ L^p$, $1 <p <\ii$. In particular, the averages $M_n(T) $ converge strongly in $ L^p$ for any $T \in DS$. For the spaces
$L^1$ and $L^\ii$, the mean ergodic theorem is false, in general.

It is known that if $T\in DS$, then $T(X) \su X$ for any exact interpolation (for the Banach pair
$(L^1, L^\ii)$) symmetric space $X$ of real-valued measurable functions on $(\Om,\mu)$. In addition,
$ \| T \|_{X\to X}\leq 1$ (see, for example, \cite [Ch.II, \S 4, Sec.2] {kps}).
Recall also that the class of exact interpolation symmetric spaces for the Banach pair $ (L^1, L^\ii)$ coincides with the class
of fully symmetric spaces on $(\Om,\mu)$ \cite [Ch.II, \S 4, Theorem 4.3] {kps}. Therefore, there is the problem of describing the class of fully symmetric spaces $X$ for which the mean ergodic theorem with respect to the action of an arbitrary $T \in DS$ is valid.

If $X$ is a separable symmetric space on the non-atomic measure space $((0, a), \nu)$, where $0<a<\ii$ and $\nu$ is the Lebesgue measure, then the averages $M_n(T)$ converge strongly in $X$ for every $T\in DS$
(see \cite{ve, vf1}; also \cite[Ch.2, \S 2.1, Theorem 2.1.3]{vf}). At the same time, if $X$ is a non-separable fully
symmetric space, then for each $f\in X\sm\ol{L^\ii(0,a)}^{\|\cdot\|_ X}$ there are $T\in DS$ and a function
$\widetilde f$, equimeasurable with $f$, such that the sequence $\{A_n(T)(\wt f)\}$ does not converge strongly in $X$ \cite{vf1}. Note also that, for the separable symmetric space $L^1((0, \ii),\nu)$, there exists $T \in DS$
such that the averages $M_n(T)$ do not converge strongly in $L^1((0, \ii), \nu) $.

The main result of Section 4 is Theorem \ref{t45}, which gives a criterion for a fully symmetric space $X$
to satisfy the following: the averages $M_n(T)(f)$ converge strongly in $X$ for every $f\in X$ and $T\in DS$.

In Section 5 we discuss some (classes of) fully symmetric spaces for which Dunford-Schwartz-type ergodic theorems hold/fail.

\section{Preliminaries}
Let $(\Om,\cal A,\mu) $ be a $\sigma$-finite measure space and let $L^0 = L^0(\Om)$ be the algebra of (classes of) a.e. finite real-valued measurable functions on $(\Omega, \mathcal A, \mu) $.
Let $L^0_\mu$ be the subalgebra in $L^0$ consisting of the functions $f\in L^0 $ such that
$\mu\{| f |>\lb \}<\ii$ for some $\lb>0$.

In what follows $t_\mu$ will stand for the {\it measure topology} in $L^0$, that is, the topology given by the following system of neighborhoods of zero:
$$
\cal N(\ep,\dt)=\{ f\in L^0: \ \mu\{|f|>\dt\}\leq\ep\}, \ \ \ep>0, \ \dt>0.
$$
It is well-known (see, for example, \cite[Ch.IV, \S 27, Theorem 5]{ha}) that $(L^0,t_\mu)$ is a complete metrizable vector space. Since $L^0_\mu$ is a closed linear subspace of $(L^0,t_\mu)$ (see, for example, \cite[Ch.II, \S 2]{kps}),
$(L^0_\mu,t_\mu)$ is also a complete metrizable vector space.

Denote by $L^p=L^p(\Om)\su L^0_\mu$,
$1 \leq p \leq \ii$, the classical Banach space equipped with the standard norm $\|\cdot\|_p$.

If $f\in L^0_\mu$, then a {\it non-increasing rearrangement} of $f$ is defined as
$$
f^*(t)=\inf \{\lambda>0: \ \mu\{|f| > \lambda\} \leq t\}, \ \  t>0,
$$
(see \cite[Ch.II, \S 2]{kps}).

Consider the $\sigma$-finite measure space $((0,\ii), \nu)$, where $\nu$ is the Lebesgue measure.
A non-zero linear subspace  $X\su L^0_\nu$ with a Banach norm $\|\cdot\|_X$ is called
{\it symmetric (fully symmetric)} on  $ ((0,\ii), \nu) $ if
$$
f \in X, \ g\in L^0_\nu, \ g^*(t)\leq f^*(t) \text{\ \ for all \ \ } t>0
$$
(respectively,
$$
f \in X, \ g \in L^0_\nu, \ \int\limits_0^s g^*(t) dt\leq\int\limits_0^s f^*(t) dt\text{\ \ for all\ \ } s>0
\ \ (\text{writing\ } g \prec\prec f)
$$
implies that $g \in X$ and $\| g\|_X\leq \| f\|_X$.

Let $(X,\| \cdot \|_X)$ be a symmetric (fully symmetric) space on $((0,\ii),\nu)$.
Define
$$
X(\Omega)=\{ f\in L^0_\mu:  f^*(t) \in X\}
$$
and set
$$
\| f\|_{X(\Om)}=\|  f^*(t)\|_X,  \ f\in X(\Om).
$$
It is shown in \cite{ks} (see also \cite[Ch.3, Sec.3.5]{lsz}) that $(X(\Om), \| \cdot \|_{X(\Om})$
is a Banach space and conditions  $f \in X(\Om)$, $g \in L^0_\mu$, $g^*(t)\leq f^*(t)$ for every $t>0$
(respectively,  $g \prec\prec f)$ imply that $g \in X(\Om)$ and $\| g\|_{X(\Om)}\leq \| f\|_{X(\Om)}$. In such a case,
we say that $(X(\Om), \|\cdot\|_{X(\Om)})$ is a symmetric (fully symmetric) space on  $(\Omega, \mathcal A, \mu)$
{\it generated} by the symmetric (fully symmetric) space $(X, \|\cdot\|_X)$.

Immediate examples of fully symmetric spaces are $ L^1(\Om)\cap  L^\ii(\Om)$  with the norm
$$
\|f\|_{L^1\cap L^\ii}=\max \left \{ \|f\|_{L^1(\Om)}, \|f\|_{L^\ii(\Om)} \right\}
$$
and $ L^1(\Om) +  L^\ii(\Om)$ with the norm
$$
\|f\|_{L^1 +  L^\ii}=\inf \left \{ \|g\|_1+ \|h\|_\ii: \ f = g + h, \ g \in L^1(\Om), \ h \in \ L^\ii(\Om) \right \}=
\int_0^1 f^*(t) dt
$$
(see \cite[Ch.II, \S 4]{kps}).

Every fully symmetric  space $X=X(\Om)$ is an exact interpolation space for the Banach couple
 $(L^1(\Om), L^\ii(\Om))$, hence $T(X) \su X$ and  $\|T\|_{X\to X} \leq 1$ for any $T\in DS$  
(see, for example, \cite[Ch.3, \S 2, Theorem 2.2]{bs}, \cite[Ch.II, \S 4, Theorem 4.3]{kps}).

For any symmetric space $X(\Om)$ the following continuous embeddings hold:
$$
L^1(\Om)\cap L^\ii(\Om)\su X(\Om)\su L^1(\Om)+ L^\ii(\Om)
$$
\cite[Ch.2, \S 6, Theorem 6.6]{bs}.

A symmetric space $(X(\Om), \| \cdot \|_{X(\Om)})$ is said to have {\it order continuous norm} if
$\| f_\al\|_{X(\Om)}\downarrow 0$ whenever $f_{\al}\in X$ and $f_{\al}\downarrow 0$.
It is clear that a symmetric space $(X(\Om), \| \cdot \|_{X(\Om)})$, that is generated by a symmetric space $(X, \|\cdot\|_X)$, has (respectively, has no) order-continuous norm if and only if a space $(X, \|\cdot\|_X)$  also has (or does not have) an order-continuous norm.
Besides, a symmetric space $(X,\|\cdot\|_X)$ on $((0,\ii),\nu)$ has order-continuous norm if and only if it is separable \cite[II, Ch.6, \S 6.5, Theorem 6.5.3.]{rgmp}. In addition, $(X, \| \cdot \|_X)$ is a fully symmetric space \cite[Ch.II, \S 4, Theorem 4.10]{kps}.

If $(X, \|\cdot\|_X)$ is a symmetric space on $ ((0,\ii), \nu) $, then the K\"othe dual $X^\times$  is defined as
$$
X^\times=\{f \in L^0_\nu: \ fg \in L^1 \text{ \ for all \ } g\in X\},
$$
and
$$
\|f\|_{X^\times}=\sup\left \{ \bigg |\int\limits_0^\infty fg \ d \nu \bigg|:\ g \in X, \ \|g\|_X\leq 1\right\}
\text{\ \ if\ \ }f \in X^\times.
$$
It is known that $(X^\times, \|\cdot\|_{X^\times})$ is a fully symmetric space (see, for example, \cite[Ch.II, \S 4, Theorem 4.9]{kps}, \cite[II, Ch.7, \S 7.2,  Theorem 7.2.2]{rgmp}.
In addition,
$$
X \su X^{\times\times}, \  (L^1)^\times =  L^\ii, \ (L^\ii)^\times = L^1;
$$
$$
(L^1 + L^\ii,\|\cdot\|_{L^1 + L^\ii})^\times = (L^1 \cap L^\ii,\|\cdot\|_{L^1 \cap L^\ii});
$$
$$
(L_1 \cap L^\ii,\|\cdot\|_{L^1 \cap L^\ii})^\times = (L^1 + L^\ii,\|\cdot\|_{L^1 + L^\ii})
$$
(see \cite[II, Ch.7]{rgmp}).

Note that
$$X^\times(\Om) =\{f \in L^0_\mu: \ fg \in L^1(\Om) \text{ \ for all \ } g \in X(\Om)\},$$
and
$$
\|f\|_{X^\times(\Om)}=\sup\left \{ \bigg |\int\limits_\Om fg \ d \mu \bigg|:\ g \in X(\Om), \ \|g\|_{X(\Om)} \leq 1\right\}, \ f \in X^\times(\Omega).
$$
A  fully symmetric space $(X^\times(\Om), \|\cdot\|_{X^\times(\Om)})$ is called the K\"othe dual space for symmetric
space $(X(\Om), \|\cdot\|_{X(\Om)})$.

A symmetric space $(X, \| \cdot \|_X)$ is said to possess {\it Fatou property} if conditions
$$
0 \leq f_n\in X, \ \ f_n\leq f_{n+1} \text{ \ for all\ }n, \text{ \ and\ }\sup_n \| f_n\|_X<\ii
$$
imply that there exists $f=\sup \limits_nf_n\in X$ and $\| f\|_X=\sup \limits_n \| f_n\|_X$.

If $X = X^{\times\times}$, then the symmetric space $X$ possesses Fatou property (see, for example, 
\cite[Vol.II, Ch.I, \S 1b]{lt}); in particular, the fully space $(L^1 + L^\ii,\|\cdot\|_{L^1 + L^\ii})$ possesses  Fatou property.
In addition, in any symmetric space $(X, \| \cdot \|_X)$ with Fatou property conditions $f_n \in X$, 
$\sup_n\| f_n\|_X\leq\al$, $f \in L^0$ and $f_n\to f$ in $t_\mu$ imply that $f\in X$ and $\|f\|_X\leq \al$
(see, for example, \cite[Ch.IV, \S 3, Lemma 5]{ka}).

Define
$$
\cal R_\mu= \{f \in  L^1(\Om)+L^\ii(\Om): \ f^*(t) \to 0 \text{ \ as \ } t\to \ii\}.
$$
It is clear that $\cal R_\mu$ admits a more direct description (\ref{e12}).

Note that if $\mu(\Om) < \ii$, then $\cal R_\mu$ is simply $ L^1(\Omega)$.
Therefore, we will be concerned with infinite measure spaces.

By \cite[Ch.II, \S 4, Lemma 4.4]{kps}, $(\cal R_\mu, \|\cdot\|_{ L^1 +L^\ii})$ is a symmetric space. In addition,
$\cal R_\mu$ is the closure of $L^1(\Omega)\cap  L^\ii(\Omega)$  in $ L^1(\Omega) +  L^\ii(\Omega)$ (see \cite[Ch.II, \S 3, Section 1]{kps}). In particular, $(\cal R_\mu, \|\cdot\|_{L^1 +  L^\ii})$ is a fully symmetric space.

Let $\chi_E$ be the characteristic function of a set $E\in\cal A$. Denote $\mb 1 = \chi_\Om$. The following
gives a necessary and sufficient condition for the embedding of a symmetric space into $\cal R_\mu$.

\begin{pro}\label{p21}
If $\mu(\Om)=\ii$, then a symmetric space $X=X(\Om)\su L^0_\mu$ is contained in
$\cal R_\mu$ if and only if $\mb 1\notin X$.
\end{pro}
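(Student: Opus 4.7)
The plan is to prove both implications directly, using only the definition of $\cal R_\mu$, the symmetry condition on $X$, and the fact that $\mu(\Om)=\ii$ forces $\mb 1^*\equiv 1$.

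For the forward direction ($X\su\cal R_\mu \Rightarrow \mb 1\notin X$), I would argue by contraposition. Since $\mu(\Om)=\ii$, the non-increasing rearrangement of $\mb 1$ satisfies $\mb 1^*(t)=1$ for every $t>0$, so $\mb 1^*(t)\not\to 0$ as $t\to\ii$. By the characterization of $\cal R_\mu$ via rearrangements recalled in the preliminaries, $\mb 1\notin\cal R_\mu$. Hence $\mb 1\in X$ would contradict $X\su\cal R_\mu$.

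For the reverse direction ($\mb 1\notin X \Rightarrow X\su\cal R_\mu$), I would fix an arbitrary $f\in X$ and aim to show $f^*(t)\to 0$ as $t\to\ii$. Since $f^*$ is non-negative and non-increasing, the limit $c:=\lim_{t\to\ii}f^*(t)$ exists in $[0,\ii)$ (finiteness follows from $X\su L^1+L^\ii$, which gives $\int_0^1 f^*<\ii$ and hence $f^*(t)<\ii$ for $t>0$). Suppose for contradiction $c>0$. Then the constant function $g:=c\,\mb 1$ lies in $L^0_\mu$ (take $\lb=c$ in the definition of $L^0_\mu$), and its rearrangement is $g^*(t)=c\leq f^*(t)$ for all $t>0$. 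The defining symmetry property of $X=X(\Om)$ then forces $g\in X$, whence $\mb 1=c^{-1}g\in X$, contradicting the hypothesis. Therefore $c=0$, i.e.\ $f\in\cal R_\mu$.

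There is no real obstacle here: the argument is a one-line application of the symmetric-space axiom together with the trivial observation $\mb 1^*\equiv 1$. The only mild point requiring care is to justify that the pointwise majorant $c\mb 1$ actually lies in $L^0_\mu$, so that the symmetry axiom (which is stated with the test function $g$ required to lie in $L^0_\mu$) applies, and to note that the finiteness of $c$ is built into $X$ via the continuous embedding $X\su L^1+L^\ii$.
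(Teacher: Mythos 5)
Your proof is correct and follows essentially the same route as the paper's: both directions rest on the observation that $\mb 1^*\equiv 1$ when $\mu(\Om)=\ii$, and the reverse implication applies the symmetry axiom to compare $\mb 1^*$ with a positive multiple of $f^*$ (the paper writes $\mb 1^*\leq \frac1\al f^*$ where you write $(c\,\mb 1)^*\leq f^*$, which is the same estimate). The extra remarks on the finiteness of the limit and on $c\,\mb 1\in L^0_\mu$ are harmless added care.
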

\begin{proof}
As  $\mu(\Om) = \infty$, we have $\mb 1^*(t) = 1$ for all $t > 0$, hence $\mb 1 \notin \cal R_\tau$. Therefore
$X$ is not contained in $\cal R_\mu$ whenever $\mb 1 \in X$.

Let  $\mb 1\notin X$. If $f \in X$ and $\lim\limits_{t\to \ii} f^*(t) =\al>0$, then
$$
\mb 1^*(t) \equiv 1 \leq \frac1\al f^*(t),
$$
implying $\mb 1\in  X$, a contradiction. Thus $\mb 1\notin X$ entails $X \su\cal R_\mu$.
\end{proof}

We will also need the next property of the fully symmetric space $\cal R_\mu$.

\begin{pro}\label{p22}
For every $f\in\cal R_\mu$ and $\ep>0$ there exist $g_\ep\in  L^1$  and $h_\ep\in  L^\ii$ such that
$$
f=g_{\ep}+h_{\ep} \text{ \ and \ } \| h_\ep\|_\ii\leq \ep.
$$
\end{pro}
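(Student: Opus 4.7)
The plan is to establish the decomposition by explicit truncation at level $\ep$. Given $f\in\cal R_\mu$ and $\ep>0$, I would set
$$
h_\ep=\operatorname{sgn}(f)\cdot\min(|f|,\ep),\qquad g_\ep=f-h_\ep,
$$
so that $|g_\ep|=(|f|-\ep)_+$, the identity $f=g_\ep+h_\ep$ is automatic, and $\| h_\ep\|_\ii\le\ep$ holds by construction. The only substantive point is to verify that $g_\ep\in L^1(\Om)$.

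For this I would apply the layer-cake (distribution function) formula:
$$
\| g_\ep\|_1=\int_0^\ii \mu\{|g_\ep|>\lb\}\,d\lb =\int_\ep^\ii \mu\{|f|>s\}\,ds.
$$
Since $f\in L^1+L^\ii$, I would pick a decomposition $f=u+v$ with $u\in L^1$, $v\in L^\ii$, and set $M=\max(\ep,\| v\|_\ii)$. On the tail $[M,\ii)$ the inclusion $\{|f|>s\}\su\{|u|>s-\| v\|_\ii\}$ together with $\int_0^\ii\mu\{|u|>t\}\,dt=\| u\|_1$ bounds that part of the integral by $\| u\|_1<\ii$. On the bounded range $[\ep,M]$ the function $s\mapsto\mu\{|f|>s\}$ is nonincreasing and therefore dominated by $\mu\{|f|>\ep\}$, which is finite by the defining property \eqref{e12} of $\cal R_\mu$. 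Adding the two estimates yields $\| g_\ep\|_1<\ii$.

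No real obstacle is anticipated; the argument is essentially routine truncation. The one conceptual point worth flagging is that both halves of the definition of $\cal R_\mu$ actually get used: the membership $f\in L^1+L^\ii$ controls the large-$s$ tail of $\mu\{|f|>s\}$, while the finiteness $\mu\{|f|>\ep\}<\ii$ controls the contribution on the bounded interval $[\ep,M]$. Without the distributional finiteness one could not conclude $g_\ep\in L^1$ from $f\in L^1+L^\ii$ alone.
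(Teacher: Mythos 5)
Your proof is correct, and it rests on the same underlying idea as the paper's -- cut $f$ at height $\ep$, exploiting both $f\in L^1+L^\ii$ and $\mu\{|f|>\ep\}<\ii$ -- but the execution differs in two places. First, the decomposition itself: the paper takes $g_\ep=f\cdot\chi_{\{|f|>\ep\}}$ and $h_\ep=f\cdot\chi_{\{|f|\le\ep\}}$ (restriction to a set), whereas you truncate in value, so your $g_\ep=\operatorname{sgn}(f)(|f|-\ep)_+$ is pointwise smaller in modulus than the paper's on $\{|f|>\ep\}$; either choice works. Second, and more substantively, the verification that $g_\ep\in L^1$: the paper writes $f=g+h$ with $g\in L^1$, $h\in L^\ii$, and observes that $g_\ep=g\cdot\chi_{\Om_\ep}+(h-h_\ep)\cdot\chi_{\Om_\ep}$ is the sum of an $L^1$ function and a bounded function supported on the finite-measure set $\Om_\ep=\{|f|>\ep\}$, which is a two-line qualitative argument; you instead compute $\|g_\ep\|_1=\int_\ep^\ii\mu\{|f|>s\}\,ds$ by the layer-cake formula, bound the tail via the inclusion $\{|f|>s\}\su\{|u|>s-\|v\|_\ii\}$, and bound the middle range by monotonicity of the distribution function. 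Your route is slightly longer but yields the explicit estimate $\|g_\ep\|_1\le\|u\|_1+(M-\ep)\,\mu\{|f|>\ep\}$, and your closing remark that both the membership $f\in L^1+L^\ii$ and the distributional finiteness are genuinely needed is accurate: the paper's proof uses them in exactly the same two roles.
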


\begin{proof} If
$$
\Om_\ep=\{|f|>\ep\}, \ \ g_\ep=f \cdot \chi_{\Om_\ep}, \ \ h_\ep=f \cdot \chi_{\Om \sm \Om_\ep},
$$
then $\| h_{\ep}\|_{\ii}\leq \ep$. Besides, as $f\in L^1+ L^\ii$, we have
$$
f=g_\ep+h_\ep=g+h
$$
for some $g\in L^1$, $h\in L^\ii$. Then, since $f\in \cal R_\mu$, we have $\mu(\Om_\ep)<\ii$, which implies that
$$
g_{\ep}=g\cdot \chi_{\Om_\ep}+(h-h_\ep)\cdot \chi_{\Om_\ep}\in L^1.
$$
\end{proof}

\section{Almost uniform convergence in Dunford-Schwartz pointwise ergodic theorem}

The main goal of this section is to prove the following extension of the classical Dunford-Schwartz pointwise ergodic theorem.

\begin{teo}\label{t31}
Assume that $(\Om, \cal A, \mu)$ is an arbitrary measure space, and let $X$ be a fully symmetric space on
$(\Om,\cal A, \mu)$ such that $\mb 1\notin X$. If $T\in DS$ and $f\in X$, then the averages (\ref{e11}) converge a.u. to some $\wh f\in X$.  In particular, $M_n(T)(f)\to \wh f\in \cal R_\mu$ a.u. for all $f\in \cal R_\mu$.
\end{teo}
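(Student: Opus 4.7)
The plan is to reduce to the case $f \in \cal R_\mu$ using $X \su \cal R_\mu$ (Proposition~\ref{p21}), then bootstrap from $L^1(\Om)$ to $\cal R_\mu$ via the decomposition of Proposition~\ref{p22}, and finally to return the a.u.\ limit $\wh f$ to the original $X$ using Hardy-Littlewood-P\'olya submajorization $\prec\prec$.

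The core technical input---presumably the content of Lemma~\ref{l31}---is to upgrade a.e.\ convergence to a.u.\ convergence on $L^1(\Om)$. I would combine the classical a.e.\ Dunford-Schwartz statement with the weak-$(1,1)$ maximal inequality $\mu\{\sup_n |M_n(T)(f)| > \lb\} \leq \|f\|_1/\lb$. For fixed $\ep > 0$, the oscillation sets
$$
B_k(\ep) = \{\sup_{m,n \geq k} |M_m(T)(f) - M_n(T)(f)| > \ep\}
$$
satisfy $B_k(\ep) \su \{\sup_n |M_n(T)(f)| > \ep/2\}$, so $\mu(B_1(\ep)) < \ii$; they form a decreasing sequence with null intersection by a.e.\ Cauchyness, and hence $\mu(B_k(\ep)) \downarrow 0$. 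Given $\dt > 0$, one then picks $k_j$ with $\mu(B_{k_j}(1/j)) < \dt/2^j$ and sets $E = \Om \sm \bigcup_j B_{k_j}(1/j)$; on this $E$ the sequence $\{M_n(T)(f)\}$ is uniformly Cauchy, and together with a.e.\ convergence this forces uniform convergence to $\wh f$.

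To extend to $f \in \cal R_\mu$, apply Proposition~\ref{p22} for each integer $k$ to get $f = g_k + h_k$ with $g_k \in L^1(\Om)$ and $\|h_k\|_\ii \leq 1/k$; since every $T^j$ contracts $L^\ii$, the term $M_m(T)(h_k) - M_n(T)(h_k)$ is bounded by $2/k$ uniformly on $\Om$. Running the $L^1$ conclusion for each $g_k$ with exceptional measure $\dt/2^k$ and intersecting the resulting sets yields a single $E$ with $\mu(\Om \sm E) < \dt$ on which $\{M_n(T)(f)\}$ is uniformly Cauchy (asymptotic tolerance $2/k$ for every $k$), and therefore uniformly convergent to some $\wh f \in L^0$. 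Letting $\dt$ run through $1/m$ patches $\wh f$ together as a well-defined element of $L^0$ on $\Om$.

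Finally, to show $\wh f \in X$, observe that each $M_n(T)$ is itself a Dunford-Schwartz operator (a convex combination of powers of $T$), so $M_n(T)(f) \prec\prec f$ for every $n$. Since a.u.\ convergence implies convergence in $t_\mu$, the rearrangements $(M_n(T)(f))^*$ converge to $\wh f^*$ at a.e.\ point of $(0,\ii)$, and Fatou's lemma yields $\int_0^s \wh f^*(t)\,dt \leq \int_0^s f^*(t)\,dt$ for every $s > 0$; that is, $\wh f \prec\prec f$. Full symmetry of $X$ then places $\wh f \in X$ with $\|\wh f\|_X \leq \|f\|_X$, and taking $X = \cal R_\mu$ gives the second assertion. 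The main obstacle I anticipate is the diagonal bookkeeping in the extension from $L^1(\Om)$ to $\cal R_\mu$, where one must simultaneously control the $L^\ii$-tolerance of the decomposition and the exceptional measure without the comfort of a maximal function defined on all of $\cal R_\mu$.
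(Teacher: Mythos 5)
Your proposal is correct, but the core step --- a.u.\ convergence for $f\in L^1$ --- is handled by a genuinely different mechanism than in the paper. You take the classical a.e.\ Dunford--Schwartz theorem as a black box and upgrade it to a.u.\ convergence via the weak-$(1,1)$ maximal inequality: the oscillation sets $B_k(\ep)$ sit inside a superlevel set of the maximal function and hence have finite measure, they decrease to a null set by a.e.\ Cauchyness, so $\mu(B_k(\ep))\downarrow 0$ by continuity from above, and a diagonal choice of $k_j$ produces the exceptional set. This is sound. The paper deliberately avoids invoking a.e.\ convergence: it proves the maximal inequality in $L^p$ (Theorem~\ref{t33}, needed only for $p=1,2$), deduces that the maximal operator is continuous at zero, shows (Lemma~\ref{l31}, via a.u.\ completeness of $L^0_\mu$) that the set of $f$ with a.u.\ convergent averages is then norm-closed, and exhibits a dense subset of $L^2$ --- the mean-ergodic decomposition $\{g+(T(h)-h):\ T(g)=g,\ h\in L^2\cap L^\ii\}$ --- on which a.u.\ convergence is immediate because $h$ is bounded; density of $L^1\cap L^2$ in $L^1$ then finishes the $L^1$ case. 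The trade-off: your route is shorter if the a.e.\ theorem is granted, but that theorem is itself the hard part, and the authors stress in the introduction that their argument is ``noticeably simpler than the proof of a.e.\ convergence'' precisely because it never needs the pointwise limit as an input. The remaining steps --- the reduction $X\su\cal R_\mu$ via Proposition~\ref{p21}, the $f=g_\ep+h_\ep$ splitting of Proposition~\ref{p22} combined with the $L^\ii$-contractivity of $M_n(T)$, and the passage to $\wh f\prec\prec f$ via $M_n(T)(f)\prec\prec f$, convergence in measure of the rearrangements, and Fatou's lemma --- coincide with the paper's.
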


\begin{rem}
In proving Theorem \ref{t31}, we can and will assume that $(\Om,\cal A,\mu)$  is $\sigma$-finite. Indeed, if $f\in X$ and
$\mb 1\notin X$, then $f\in \cal R_\mu$ by Proposition \ref{p21}, which implies that $\{T^k(f)\}_{k=0}^\ii\su \cal R_\mu$.
Therefore, the set $\Om_f=\bigcup\limits_k\ \{T^k(f)\neq 0\}$ is $\sigma$-finite, and one can replace $\Om$ by $\Om_f$.
\end{rem}

In view of Propositions \ref{p21} and \ref{p22}, proof of Theorem \ref{t31} can be easily reduced to the case $X=L^1$,
so we shall treat this case first.

Let $(X, \|\cdot\|_X)$ be a Banach space, and let $M_n: X\to L^0$ be a sequence of linear maps. Denote
$$
M^\star(f)=\sup_n|M_n(f)|,
$$
the {\it maximal function} of $f\in X$. If $M^\star(f)\in L^0$ for all $f\in X$, then the function
$$
M^\star: X\to L^0, \ \ f\in X,
$$
is called the {\it maximal operator} of the sequence $\{M_n\}$.

\begin{rem}\label{r31}
(1) If $\mu(\Om)<\ii$, then the Banach principle implies that if $M^\star(f)\in L^0$ for all $f\in X$, then
$M^\star: (X,\|\cdot\|_X)\to (L^0,t_\mu)$ is continuous at zero, which is not the case when $\mu$ is not finite;
see \cite[Sec.2]{li1}.

\noindent
(2) If $f\in X$ and $E\in \cal A$ are such that $\{M_n(f)\chi_E\}\su L^\ii$, then it is easy to see that
$$
\|M^\star(f)\chi_E\|_\ii=\sup_n\|M_n(f)\chi_E\|_\ii
$$
\cite[Proposition 1.1]{li}. Therefore, the continuity of $M^\star: (X,\|\cdot\|_X)\to (L^0,t_\mu)$ at zero can be expressed as follows: given $\ep>0$, $\dt>0$, there is $\gm>0$ such that for every $f\in X$ with $\| f\|_X \leq\gm$
it is possible to find $E\su \Om$ satisfying conditions
$$
\mu(\Om\sm E)\leq\ep \text{ \ \ and \ \ } \sup_n\|M_n(f)\chi_E\|_\ii\leq\dt.
$$
\end{rem}

A sequence $\{f_n\}\su L^0$ is said to converge {\it almost uniformly (a.u.)} to $f\in L^0$ if for every $\ep>0$
there is a set $E\su \Om$ such that $\mu(\Om\sm E)\leq\ep$ and $\|(f-f_n)\chi_E\|_\ii\to 0$.

\begin{pro}\label{p31}
The algebra $L^0_\mu$ is complete with respect to a.u. convergence.
\end{pro}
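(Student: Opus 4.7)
The plan is to mirror the classical argument for completeness of $(L^0,t_\mu)$, and then add one extra step at the end verifying that the limit actually lies in the smaller algebra $L^0_\mu$. I start from a sequence $\{f_n\}\su L^0_\mu$ that is a.u.-Cauchy, which I read via the definition as: for every $\ep>0$ there is $E_\ep\su\Om$ with $\mu(\Om\sm E_\ep)\leq\ep$ and $\|(f_n-f_m)\chi_{E_\ep}\|_\ii\to 0$ as $n,m\to\ii$.

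Applied with $\ep=2^{-k}$, this produces sets $E_k$, and I replace them by the increasing family $F_k=\bigcap_{j\geq k}E_j$, which satisfies $\mu(\Om\sm F_k)\leq 2^{-k+1}$ and on which $\{f_n\}$ is still uniformly Cauchy in the $\|\cdot\|_\ii$-norm. By completeness of $L^\ii(F_k)$, the sequence $\{f_n\chi_{F_k}\}$ converges uniformly to some bounded measurable function $g_k$ on $F_k$; uniqueness of uniform limits makes the $g_k$ compatible on overlaps, so they glue to a measurable function $f$ on $F:=\bigcup_k F_k$, which I extend by zero off $F$. Since $\Om\sm F=\bigcap_k(\Om\sm F_k)$ has measure at most $2^{-k+1}$ for every $k$, it has measure zero, and $f\in L^0$ is a well-defined equivalence class.

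A.u.~convergence $f_n\to f$ is then immediate: given $\ep>0$, pick $k$ with $2^{-k+1}\leq\ep$; convergence is uniform on $F_k$ and $\mu(\Om\sm F_k)\leq\ep$.

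The last step, and the only place where any care is required, is to show $f\in L^0_\mu$ rather than merely $f\in L^0$. Fix $n_0$ large enough that $\|(f_{n_0}-f)\chi_{F_1}\|_\ii\leq 1$. Since $f_{n_0}\in L^0_\mu$, there is $\lb>0$ with $\mu\{|f_{n_0}|>\lb\}<\ii$. On $F_1$ the pointwise bound $|f|\leq|f_{n_0}|+1$ yields $\{|f|>\lb+1\}\cap F_1\su\{|f_{n_0}|>\lb\}$, while $\{|f|>\lb+1\}\sm F_1\su\Om\sm F_1$ has measure at most $1$; adding the two bounds gives $\mu\{|f|>\lb+1\}<\ii$, so $f\in L^0_\mu$. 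I expect this last verification to be the only genuinely new piece compared with the standard completeness proof for $(L^0,t_\mu)$; everything preceding it is a routine Egorov-style diagonal construction.
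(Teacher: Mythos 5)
Your proof is correct, but it takes a genuinely different route from the paper's. The paper gets the limit essentially for free: an a.u.\ Cauchy sequence is Cauchy in measure, and the (cited) completeness of $(L^0_\mu,t_\mu)$ immediately supplies $f\in L^0_\mu$ with $f_n\to f$ in measure; the remaining work is to identify the uniform limit on a large set $E$ with $f\chi_E$ via convergence in measure, which upgrades the mode of convergence to a.u. You instead build the limit from scratch by an Egorov-style exhaustion $F_k\uparrow F$, glue the uniform limits, and then verify $f\in L^0_\mu$ by hand via the distribution-function estimate $\mu\{|f|>\lb+1\}\leq\mu\{|f_{n_0}|>\lb\}+\mu(\Om\sm F_1)<\ii$. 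Your version is more self-contained (it never invokes completeness of the measure topology), at the cost of the extra membership check that the paper avoids. One small inaccuracy: you cannot literally appeal to ``completeness of $L^\ii(F_k)$,'' since the a.u.\ Cauchy condition only bounds the \emph{differences} $(f_n-f_m)\chi_{F_k}$, not the individual $f_n\chi_{F_k}$, so the uniform limit $g_k$ need not be bounded. Either argue directly that a uniformly Cauchy sequence of a.e.\ finite measurable functions converges uniformly to an a.e.\ finite measurable limit (write $f_n\chi_{F_k}=f_N\chi_{F_k}+(f_n-f_N)\chi_{F_k}$ and apply completeness of $L^\ii$ to the second summand), or shrink $F_k$ further using $f_n\in L^0_\mu$ so that each $f_n$ is bounded there, as the paper does. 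Neither fix affects the rest of your argument, and in particular your final step, which only uses $|f|\leq|f_{n_0}|+1$ on $F_1$, is unaffected.
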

\begin{proof}
Assume that $\{f_n\}\su L^0_\mu$ is a.u. Cauchy. Then it is clearly Cauchy with respect to $t_\mu$. Since $L^0_\mu$
is complete relative to $t_\mu$, there is $f\in L^0_\mu$ such that $f_n\to f$ in measure.

To show that $f_n\to f$ a.u., fix $\ep>0$. Since $f_n\in L^0_\mu$ for every $n$ and $\{ f_n\}$ is a.u. Cauchy, it is possible
to construct $E\su \Om$ such that $\mu(\Om\sm E)\leq\ep$, $f_n\chi_E\in L^\ii$ for every $n$, and
$$
\|(f_m-f_n)\chi_E\|_\ii\to 0 \text{ \ \ as\ \ } m, n\to\ii.
$$
This implies that there is $\wh f\in L^\ii$ such that $\| \wh f-f_n\chi_E\|_\ii\to 0$, hence $f_n\chi_E\to \wh f$
in measure. But $f_n\to f$ in measure implies that $f_n\chi_E\to f\chi_E$ in measure, hence $\wh f=f\chi_E$ and so
$$
\|(f-f_n)\chi_E\|_\ii=\| \wh f-f_n\chi_E\|_\ii\to 0.
$$
Therefore, the sequence $\{f_n\}$ is a.u. convergent in $L^0_\mu$, that is, $L^0_\mu$ is a.u. complete.
\end{proof}

\begin{lm}\label{l31}
If the maximal operator $M^\star: (X,\|\cdot\|_X)\to (L^0,t_\mu)$ of a sequence $M_n: X\to L^0_\mu$ of linear maps is
continuous at zero, then the set
$$
X_c= \{ f\in X: \{M_n(f)\} \text{\ converges a.u.}\}
$$
is closed in $X$.
\end{lm}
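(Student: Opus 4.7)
The plan is to show $X_c$ is sequentially closed by proving that the limit $f$ of any norm-convergent sequence $\{f_k\}\su X_c$ satisfies: $\{M_n(f)\}$ is a.u. Cauchy. Since $M_n(f)\in L^0_\mu$ and $L^0_\mu$ is a.u. complete by Proposition \ref{p31}, a.u. Cauchy-ness is enough to conclude $\{M_n(f)\}$ converges a.u., hence $f\in X_c$.

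First, fix arbitrary $\ep>0$ and $\dt>0$. Using the continuity at zero of $M^\star:(X,\|\cdot\|_X)\to(L^0,t_\mu)$ reformulated via Remark \ref{r31}(2), I would pick $\gm>0$ such that for any $g\in X$ with $\| g\|_X\leq \gm$ there exists $E_1\su\Om$ with $\mu(\Om\sm E_1)\leq \ep/2$ and $\sup_n\|M_n(g)\chi_{E_1}\|_\ii\leq \dt/3$. Then choose $k$ so large that $\| f-f_k\|_X\leq \gm$ and apply the above to $g=f-f_k$ to produce the corresponding set $E_1$.

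Next, since $f_k\in X_c$, the sequence $\{M_n(f_k)\}$ converges a.u.; in particular it is a.u. Cauchy, so there is $E_2\su\Om$ with $\mu(\Om\sm E_2)\leq \ep/2$ and an $N$ such that $\|(M_m(f_k)-M_n(f_k))\chi_{E_2}\|_\ii\leq \dt/3$ for all $m,n\geq N$. Set $E=E_1\cap E_2$, so $\mu(\Om\sm E)\leq \ep$. Using linearity of each $M_n$, the decomposition
$$
M_m(f)-M_n(f)=M_m(f-f_k)-M_n(f-f_k)+\bigl(M_m(f_k)-M_n(f_k)\bigr)
$$
together with the triangle inequality for $\|\cdot\chi_E\|_\ii$ gives $\|(M_m(f)-M_n(f))\chi_E\|_\ii\leq \dt$ for all $m,n\geq N$. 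This proves $\{M_n(f)\}$ is a.u. Cauchy, completing the argument via Proposition \ref{p31}.

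The step that requires a bit of care, rather than being a genuine obstacle, is the translation of the continuity of $M^\star$ at zero into the Egorov-type statement used in the first paragraph; this is exactly the content of Remark \ref{r31}(2) (and the fact that $M_n(g)\in L^0_\mu$ means the maximal function restricted to an appropriately chosen $E_1$ automatically lies in $L^\ii$). Everything else is a standard $\dt/3$ splitting that exploits linearity of $M_n$ and the two separate choices of sets $E_1$, $E_2$ whose intersection still has small complement.
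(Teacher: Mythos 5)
Your proof is correct and follows essentially the same route as the paper's: the same $\dt/3$ splitting via $M_m(f)-M_n(f)=M_m(f-f_{k})-M_n(f-f_{k})+(M_m(f_{k})-M_n(f_{k}))$, the same use of Remark \ref{r31}(2) to control the maximal function of $f-f_k$ on a set of small complement, and the same appeal to Proposition \ref{p31} to pass from a.u. Cauchy to a.u. convergent. No differences worth noting.
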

\begin{proof}
Let $X_c\ni f_k\to f$ in the norm $\|\cdot\|_X$. Fix $\ep>0$, $\dt>0$. In view of Remark \ref{r31}(2) and since $M^\star$
is continuous at zero, there are $f_{k_0}$ and $E_0\su \Om$ such that
$$
\mu(\Om\sm E_0)\leq\frac\ep2 \text{ \ \ and \ \ } \sup_n\|M_n(f-f_{k_0})\chi_{E_0}\|_\ii\leq\frac\dt3.
$$
Next, since the sequence $\{M_n(f_{k_0})\}$ converges a.u., there are $E_1\su \Om$ and $N\in \mathbb N$ such that
$$
\mu(\Om\sm E_1)\leq\frac\ep2 \text{ \ \ and \ \ }
\|(M_m(f_{k_0})-M_n(f_{k_0}))\chi_{E_1}\|_\ii\leq\frac\dt3 \text{ \ \ for all\ \ } m,n\ge N.
$$
Therefore, setting $E=E_0\cap E_1$, we arrive at $\mu(\Om\sm E)\leq\ep$ and
\begin{equation*}
\begin{split}
\|(M_m(f)-M_n(f))\chi_E\|_\ii & \leq \|M_m(f-f_{k_0})\chi_E\|_\ii+\|M_n(f-f_{k_0})\chi_E\|_\ii \\
&+\|(M_m(f_{k_0})-M_n(f_{k_0}))\chi_E\|_\ii\leq \dt
\end{split}
\end{equation*}
for all $m,n\ge N$. This means that the sequence $\{M_n(f)\}$ is a.u. Cauchy, which, by Proposition \ref{p31}, entails
that $\{M_n(f)\}$ converges a.u., hence $f\in X_c$, and we conclude that $X_c$ is closed in $(X,\|\cdot\|_X)$.
\end{proof}

Therefore, since $T(L^1)\su L^1\su L^0_\mu$ for a given $T\in DS$, in order to prove that the averages (\ref{e11})
converge a.u. for every $f\in X=L^1$, it is sufficient to show that
\begin{enumerate}[(A)]
\item
the maximal operator
$M(T)^\star: (L^1, \|\cdot\|_1)\to (L^0,t_\mu)$ is continuous at zero;
\item
there is a dense subset $\cal D$ of $L^1$
such that the sequence $\{M_n(T)(f)\}$ converges a.u. whenever $f\in \cal D$.
\end{enumerate}

Here is our main tool, Hopf's maximal ergodic theorem \cite{ho}; see also \cite[Theorem 1.1, p.75]{pe}:

\begin{teo}\label{t32}
If $T: L^1\to  L^1$ is a positive linear contraction
and $f \in  L^1$, then
$$
\int\limits_{\{M(T)^\star(f)>0\}} f \ d\mu \ge 0.
$$
\end{teo}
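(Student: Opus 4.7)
The plan is to reduce to partial sums. Setting $S_n(f) = \sum_{k=0}^{n-1} T^k(f)$ and $F_N = \max(S_1, \ldots, S_N)$, I would first observe that $\{M(T)^\star(f) > 0\} = \bigcup_N \{F_N > 0\}$ (dividing by $n$ does not affect the sign, and one has $\sup_n S_n > 0$ precisely when some $S_n > 0$). Since the integrands $f\chi_{\{F_N>0\}}$ are dominated by $|f|\in L^1$, dominated convergence reduces the theorem to proving
$$\int_{\{F_N > 0\}} f \, d\mu \geq 0$$
for each fixed $N$.

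The key pointwise inequality I would establish is
$$F_N \leq f + T(F_N^+), \qquad F_N^+ := \max(F_N, 0).$$
For each $1 \leq k \leq N$, the bound $S_k \leq F_N^+$ combined with positivity of $T$ gives $T(S_k) \leq T(F_N^+)$, so the recursion $S_{k+1} = f + T(S_k)$ yields $S_{k+1} \leq f + T(F_N^+)$. For $k = 0$, $S_1 = f \leq f + T(F_N^+)$ since $T(F_N^+) \geq 0$ by positivity. Taking the maximum over $1 \leq k \leq N$ produces the claim.

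Restricting to $\{F_N > 0\}$, where $F_N^+ = F_N$, the inequality rearranges to $f \geq F_N^+ - T(F_N^+)$ on this set. Integrating, using that $F_N^+$ vanishes off $\{F_N > 0\}$, that $T(F_N^+) \geq 0$ everywhere, and that $\|T(F_N^+)\|_1 \leq \|F_N^+\|_1$ (the $L^1$-contraction property applied to a nonnegative function), I would chain
$$\int_{\{F_N > 0\}} f \, d\mu \geq \int F_N^+ \, d\mu - \int_{\{F_N > 0\}} T(F_N^+) \, d\mu \geq \int F_N^+ \, d\mu - \int T(F_N^+) \, d\mu \geq 0.$$

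The main obstacle is the pointwise estimate $F_N \leq f + T(F_N^+)$: the degenerate case $k = 0$ (no prior iterate to dominate) and the cases $k \geq 1$ (handled by positivity and the recursion) must be packaged into a single uniform bound expressed in terms of $T(F_N^+)$ rather than $T(F_{N-1}^+)$ or similar, because it is precisely this self-referential form that makes the contraction inequality cancel the two integrals and produce $0$.
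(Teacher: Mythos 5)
The paper does not actually prove this statement: it is quoted as Hopf's maximal ergodic theorem, with references to \cite{ho} and \cite[Theorem 1.1, p.~75]{pe}, so there is no internal proof to compare against. Your argument is the standard Garsia proof --- the pointwise bound $F_N\leq f+T(F_N^+)$ obtained from the recursion $S_{k+1}=f+T(S_k)$ and positivity of $T$, integration over $\{F_N>0\}$, cancellation of $\int F_N^+\,d\mu$ against $\int T(F_N^+)\,d\mu$ via the $L^1$-contraction applied to the nonnegative function $F_N^+$, and a dominated-convergence passage to the union $\bigcup_N\{F_N>0\}$ --- and it is correct; it is in fact essentially the proof given in the cited reference \cite{pe}. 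One point deserves attention. The paper defines the maximal function by $M^\star(f)=\sup_n|M_n(f)|$, with an absolute value. Under that literal reading the set $\{M(T)^\star(f)>0\}$ equals $\{\sup_n|S_n|>0\}$, which is strictly larger than your $\bigcup_N\{F_N>0\}=\{\sup_n S_n>0\}$, and the theorem as literally stated is then false: take $T$ the identity and $f=-\chi_A$ with $0<\mu(A)<\infty$, so that $\{M(T)^\star(f)>0\}=A$ while $\int_A f\,d\mu=-\mu(A)<0$. Your silent replacement of $\sup_n|M_n(f)|$ by $\sup_n M_n(f)$ in the first step is exactly the correct reading of Hopf's theorem, and it is the version actually used in the proof of Theorem \ref{t33}, where one only needs the inclusion $\{M(T)^\star(|f|)>\lambda\}\cap F\subseteq\{M(T)^\star(f_{\lambda,F})>0\}$. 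So this is a notational defect of the paper rather than a gap in your argument, but you should state explicitly that the maximal function appearing in Hopf's theorem carries no absolute value.
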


We shall prove the following maximal inequality for $T\in DS$ acting in $L^p$, $1\leq p<\ii$. Note that,
in order to establish Theorem \ref{t31}, we will only need it for $p=1$ and $p=2$.

\begin{teo}\label{t33}
If $T\in DS$ and $1\leq p<\ii$,  then
\begin{equation}\label{e31}
\mu \{M(T)^{\star}(|f|)>\lb \} \leq \left (2\frac {\| f\|_p}{\lb} \right )^p \text{ \ \ for all\ \ } f\in L^p, \ \lb >0.
\end{equation}
\end{teo}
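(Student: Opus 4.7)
The plan is a three-step argument. First, reduce to $T\in DS^+$ via the linear modulus: every $T\in DS$ admits a positive operator $|T|\in DS^+$ with $|T(g)|\le |T|(|g|)$, hence by induction $|T^k(g)|\le |T|^k(|g|)$ and $M(T)^\star(|f|)\le M(|T|)^\star(|f|)$. It therefore suffices to prove (\ref{e31}) under the additional assumption that $T\in DS^+$ and $f\ge 0$ (in place of $|f|$).

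Second, for $T\in DS^+$ and $0\le g\in L^1$ I would establish the $L^1$-weak-type bound $\lb\,\mu\{M(T)^\star(g)>\lb\}\le \|g\|_1$ as a consequence of Theorem \ref{t32}. Since $T$ is a positive $L^\ii$-contraction, $T(\mb 1)\le \mb 1$; by induction $T^k(\mb 1)\le \mb 1$, so $M_n(T)(\chi_A)\le \mb 1$ for every measurable $A\su\Om$. Given $A\su E:=\{M(T)^\star(g)>\lb\}$ with $\mu(A)<\ii$, put $h=g-\lb\chi_A\in L^1$. For $x\in E$ some $n$ satisfies $M_n(T)(g)(x)>\lb$, and hence
$$
M_n(T)(h)(x)\ge M_n(T)(g)(x)-\lb\,M_n(T)(\chi_A)(x)>0,
$$
which shows $E\su\{M(T)^\star(h)>0\}$. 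Applying Theorem \ref{t32} to $h$ yields $\int_{\{M(T)^\star(h)>0\}}h\,d\mu\ge 0$, and since $A$ lies entirely in this set,
$$
\lb\mu(A)\le \int_{\{M(T)^\star(h)>0\}}g\,d\mu\le \|g\|_1.
$$
The supremum over such $A$ (permitted by $\sigma$-finiteness of the support of $g$) gives the claim.

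Third, for general $1\le p<\ii$ and $f\in L^p$ I would truncate: write $|f|=f_1+f_2$ with $f_1=|f|\chi_{\{|f|>\lb/2\}}$ and $f_2=|f|\chi_{\{|f|\le \lb/2\}}$. As $0\le f_2\le(\lb/2)\mb 1$ and $T^k(\mb 1)\le\mb 1$, $M(T)^\star(f_2)\le \lb/2$, forcing $\{M(T)^\star(|f|)>\lb\}\su\{M(T)^\star(f_1)>\lb/2\}$. The Chebyshev estimate $\|f_1\|_1\le(2/\lb)^{p-1}\|f\|_p^p$, combined with the $L^1$-bound applied to $f_1$, gives
$$
\mu\{M(T)^\star(|f|)>\lb\}\le \frac{2\|f_1\|_1}{\lb}\le \left(\frac{2\|f\|_p}{\lb}\right)^p.
$$
The main obstacle is Step~1: invoking the existence of a DS-type linear modulus of $T$. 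This is classical (see, e.g., Krengel's treatise on ergodic theorems) but nontrivial; once granted, Steps~2 and~3 are routine Hopf-type and Calder\'on-type truncation arguments.
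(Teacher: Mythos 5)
Your proposal is correct and follows essentially the same route as the paper: reduction to $T\in DS^+$ via the linear modulus, Hopf's maximal ergodic theorem (Theorem \ref{t32}) applied to $g-\lb\chi_A$ over finite-measure subsets to get the weak $(1,1)$ bound, and truncation at level $\lb/2$ for general $p$. The only cosmetic differences are the order of the modulus reduction and that you estimate $\|\,|f|\chi_{\{|f|>\lb/2\}}\|_1$ by Chebyshev where the paper instead dominates $|f|$ pointwise by $g_\lb+(2/\lb)^{p-1}|f|^p$ and applies the weak $(1,1)$ bound to $|f|^p$; both give the constant $2^p$.
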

\begin{proof}
Assume first that $T\in DS^+$. Fix $f\in L^1$ and $\lb>0$. Pick $F\su \Om$ such that $\mu(F) < \ii$ and let
$f_{\lb,F}=|f|-\lb\chi_F$. Then, since
$\lb\chi_F \in  L^\ii$, we have $\| T(\lb \chi_F)\|_\ii \leq \lb$, hence $T(\lb \chi_F)\leq \lb\cdot \mb 1$.
Therefore $T(f_{\lb,F})\ge T(|f|)- \lb \cdot \mb 1$, and we derive
$$
M(T)^\star(f_{\lb,F})\ge M(T)^\star(|f|)-\lb\cdot \mb 1.
$$
By Theorem \ref{t32},
$$
\int_{\{M(T)^\star(f_{\lb,F})>0\} }f_{\lb,F}\ge 0,
$$
implying that
$$
\begin{aligned}
 \| f\|_1
&\ge \int_{\{M(T)^\star(f_{\lb,F})>0\} } |f|=\int_{\{M(T)^\star(f_{\lb,F})>0\}} (f_{\lb,F}+\lb \chi_F) \\
&\ge \int_{\{M(T)^\star(f_{\lb,F})>0\} }\lb \chi_F = \lb \mu \left (\{M(T)^\star(f_{\lb,F})>0\}\cap F \right) \\
&\ge \lb \mu \left (\{M(T)^\star(|f|)>\lb \}\cap F\right ),
\end{aligned}
$$
\vskip 2pt
\noindent
Therefore, we have
$$
\mu \left (\{M(T)^\star(|f|)>\lb \}\cap F\right ) \leq \frac {\| f\|_1}\lb
$$
for every $F\su \Om$ with $\mu(F)<\ii$. Since  $(\Om, \mu)$ is a  $\sigma$-finite, we arrive at the following
maximal inequality for $T\in DS^+$ acting in $L^1$:
\begin{equation}\label{e32}
\mu \{M(T)^\star(|f|)>\lb \}\leq\frac {\| f\|_1}\lb \text{ \ \ for all\ \ }f\in  L^1, \ \lb>0.
\end{equation}

Now, fix $1<p<\ii$, $f\in L^p$, and $\lb>0$. Since $t\ge\frac \lb2$ implies $t\leq \left ( \frac 2\lb \right )^{p-1}t^p$,
we have
$$
|f(\om)|\leq  \left ( \frac 2\lb \right )^{p-1}|f(\om)|^p \text{ \ \ whenever \ } |f(\om)|\ge\frac\lb2.
$$
Then, denoting $A_\lb=\{|f|< \lb /2 \}$ and $g_\lb=|f|\chi_{A_\lb}$, we obtain
$$
|f|\leq g_\lb+\left ( \frac 2 \lb \right )^{p-1}|f|^p.
$$
Since $g_\lb\in  L^\ii$,
we have $\| T(g_\lb)\|_\ii\leq \| g_\lb\|_{\ii}\leq \frac \lb 2$, and it follows that
$$
M(T)^\star(|f|)\leq\frac \lb 2\cdot \mb 1+\left ( \frac 2\lb \right )^{p-1}M(T)^\star(|f|^p).
$$
As $|f|^p \in  L^1$, employing (\ref{e32}), we obtain a maximal inequality for $T\in DS^+$ acting in $L^p$,
$1\leq p<\ii$:

\begin{equation}\label{e33}
\begin{split}
\mu \{M(T)^\star(|f|)>\lb \}&
\leq \mu \left \{ \left (\frac 2 \lb\right )^{p-1}M(T)^\star(|f|^p)>\frac \lb 2 \right \} \\
&=\mu \left \{(M(T)^\star(|f|^p)>\left (\frac \lb2 \right )^p \right \} \\
&\leq \left ( 2\frac {\|f \|_p}\lb \right )^p, \ \ \ \ \ f\in L^p, \ \lb>0.
\end{split}
\end{equation}

Finally, let $T\in DS$. If $|T|:L^p\to  L^p$  is the linear modulus of $T:L^p\to  L^p$,
then $|T|\in DS^+$ and $|T^k(f)|\leq |T|^k(|f|)$
for all $f\in L^p$ and $1 \leq p \leq \ii$, $k=0,1,\dots$ (see, for example, \cite[Ch.1, \S 1.3]{me}, \cite[Ch.4, \S 4.1, Theorem 1.1]{kr}). Therefore, given $f\in L^p$, $1\leq p<\ii$,
we have
$$
|M_n(T)(|f|)|\leq\frac 1n \sum_{k=0}^{n-1}\left |T^k(|f|)\right |\leq \frac 1n \sum_{k=0}^{n-1} |T|^k(|f|) = M_n(|T|)(|f|).
$$
Thus, applying inequality (\ref{e33}) to $|T|\in DS^+$, we obtain (\ref{e31}):
$$
\mu\{M(T)^\star(|f|)>\lb\} \leq  \mu\{M(|T|)^\star(|f|)>\lb \}\leq\left ( 2\frac {\|f \|_p}\lb \right )^p, \ \ f\in L^p, \ \lb>0.
$$
\end{proof}

Now we can prove Theorem \ref{t31}:
\begin{proof}
Show first that the sequence $\{M_n(f)\}$ converges a.u. whenever $f\in L^1$. In view of Theorem \ref{t33},
the maximal operator $M(T)^\star: (L^p,\|\cdot\|)\to (L^0,t_\mu)$ is continuous at zero for
every $1\leq p<\ii$. This, by Lemma \ref{l31}, implies that the set
$$
\cal C_p=\{ f\in L^p: \{M_n(f)\} \text{\ converges a.u.} \}
$$
is closed in $L^p$, $1\leq p<\ii$.

In particular, the sets $\cal C_1$ and $\cal C_2$ are closed in $L^1$ and $L^2$, respectively. Therefore,
taking into account that the set $L^1\cap L^2$ is dense in $L^1$,
it is sufficient to show that the sequence $\{M_n(T)(f)\}$ converges a.u. for each $f$ in a dense subset $L^2$.

Denote by $(\cdot,\cdot)$ the standard inner product in $L^2$.  Let
$$
N=\{T(h) - h: \ h \in L^2\cap L^\ii\}.
$$
If $L^2 \ni g\in N^\perp$, then, as $L^2\cap L^\ii$ is dense in $L^2$, we have
$$
0=(g,T(h)-h)=(T^*(g)-g,h), \ \ h\in L^2,
$$
so $T^*(g)=g$. Recalling that $T$ is a contraction in $L^2$, we obtain
\begin{equation}\label{e34}
\begin{aligned}
\| T(g)-g\|^2_2
&=(T(g)-g,T(g)-g)= \\
&\| T(g)\|^2_2 -  (g,T^*(g))- (T^*(g),g)+\| g\|^2_2
&=\| T(g)\|^2_2-\| g\|^2_2\leq 0, \\
\end{aligned}
\end{equation}
so $T(g)=g$ as well, hence $N^{\perp}\su L=\{g\in L^2: T(g)=g\}$. Conversely, if $g\in L$, then,
since $T^*$ is also a contraction in $L^2$, replacing $T$ by $T^*$ in (\ref{e34}), we obtain
$T^*(g)=g$, which implies that $g\in N^\perp$. Therefore $N^\perp=L$, hence $\ol N \oplus L=L^2$,
and we conclude that the set
$$
\cal D=\{ g+(T(h)-h) :\  g\in L^2,\  T(g)=g; \ h\in L^2\cap L^\ii\}
$$
is dense in $L^2$. Because $h\in L^\ii$, it is clear that the
sequence $\{M_n(f)\}$ converges a.u. for every $f\in \cal D$, and we conclude that this sequence converges a.u. for all
$f\in L^1$.

Now, let $X \su L^1+L^\ii$ be a fully symmetric space such that $\mb 1\notin X$, and let $f\in X$.
By Proposition \ref{p21}, $f\in \cal R_\mu$.

Fix $\ep>0$ and $\dt>0$. In view of Proposition \ref{p22}, there are $g\in L^1$ and $h\in L^\ii$ such that
$$
f=g+h, \ \ g\in L^1, \text{\ \ and\ \ } \| h\|_\ii\leq\frac\dt3.
$$
Since $g\in L^1$, there exists $E\su \Om$ and $N\in \mathbb N$ satisfying conditions
$$
\mu(\Om\sm E)\leq\ep \text{ \ \ and \ \ } \| (M_m(g)-M_n(g))\chi_E\|_\ii\leq\frac\dt3 \text{ \ \ for all\ \ }m,n\ge N.
$$
Then, given $m,n\ge N$, we have
\begin{equation*}
\begin{split}
\|(M_m(f)-M_n(f))\chi_E\|_\ii&=\|(M_m(g)-M_n(g))\chi_E\|_\ii+\| (M_m(h)-M_n(h))\chi_E\|_\ii \\
&\leq \frac\dt3+\|M_m(h)\|_\ii+\|M_n(h)\|_\ii\leq \frac\dt3+2\|h\|_\ii\leq\dt,
\end{split}
\end{equation*}
implying, by Proposition \ref{p31}, \ that the sequence \ $\{M_n(f)\}$ \ converges a.u. to some \ $\wh f\in L^0_\mu$.

Since  $L^1+L^{\infty}$ possesses Fatou property and 
$$M_n(f)\in L^1+L^\ii, \ \ \sup_n \|M_n(f)\|_{L^1+L^\ii} \leq \|f\|_{L^1+L^\ii}, \ \ M_n(f)\to\wh f \text{ \ in \ } t_\mu,
$$
it follows that $\wh f \in L^1+L^\ii$ \cite[Ch.IV, \S 3, Lemma 5]{ka}.
In addition,  
$$
M_n(f)^*(t) \to \wh f^{\, *}(t) \text{ \ \ a.e.  on\ \ } (0,\ii)
$$
(see, for example, \cite[Ch.II, \S 2, Property $11^\circ$]{kps}).
Since $T \in DS$, it follows that $M_n(f)^*(t)\prec\prec f^*(t)$ for all $n$ (see, for example, 
\cite[Ch.II, \S 3, Section 4]{kps}). Consequently, by the Fatou Theorem,
$$
\int \limits_0^s\wh f^{\,*}(t) dt \leq \sup_n \int \limits_0^s M_n(f)^*(t) dt \leq \int\limits_0^s f^*(t) dt \text{ \ \ for all\ \  }s>0,
$$
that is, $\wh f^{\,*}(t)\prec\prec f^*(t)$.
Since $ X$ is a fully symmetric space and $f\in  X$, it follows that $\widehat f\in  X$.
\end{proof}

Now we shall present a class of measure spaces $(\Om,\cal A,\mu)$ for which $E=\cal R_\mu$ is the largest
fully symmetric subspace of $L^1+ L^\ii$ for which Theorem \ref{t31} is valid.
We say that a measure space  $(\Om,\cal A,\mu)$  is {\it quasi-non-atomic}
if it has finitely many atoms or its atoms have the same measure.

\begin{teo}\label{t34}
Let  an $\sigma$-finite infinite measure space $(\Om,\cal A,\mu)$ be quasi-non-atomic.
Given a fully symmetric space $X \subset  L^1+L^\ii$, the following conditions are equivalent:
\begin{enumerate}[(i)]
\item $X \subseteq \cal R_\mu$.
\item $\mathbf 1\notin X$.
\item For every $f \in X$ and $T \in DS$ the averages (\ref{e11}) converge a.u. to some $\wh f\in X$.
\end{enumerate}
\end{teo}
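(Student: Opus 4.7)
The equivalence (i) $\Leftrightarrow$ (ii) is Proposition \ref{p21} (note $\mb 1\notin\cal R_\mu$ when $\mu(\Om)=\ii$, since then $\mb 1^*(t)\equiv 1$), and (ii) $\Rightarrow$ (iii) is exactly Theorem \ref{t31}. The new content is therefore (iii) $\Rightarrow$ (ii), which I would prove contrapositively: assuming $\mb 1\in X$, I will exhibit $T\in DS$ and $f\in X$ for which $\{M_n(T)(f)\}$ fails to converge a.u.

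Using the quasi-non-atomic hypothesis together with $\mu(\Om)=\ii$, I extract a measurable $\Om_0\su\Om$ with $\mu(\Om_0)=\ii$ of one of two kinds: (a) non-atomic, hence measure-isomorphic to $((0,\ii),\nu)$; or (b) purely atomic with atoms of a common mass, hence isomorphic to $\mathbb N$ with a scaled counting measure. (Take $\Om_0$ to be the non-atomic part of $\Om$ when it has infinite measure, and the atomic part otherwise; the quasi-non-atomic hypothesis ensures the chosen piece has one of these two forms.) Both models carry a canonical injective measure-preserving ``translation by $1$'' map $\tau$, and I define
$$
T(g)=\chi_{\tau(\Om_0)}\cdot (g\cdot \chi_{\Om_0})\circ \tau^{-1},
$$
extended by $0$ on $\Om\sm \tau(\Om_0)$. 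Measure-preservation of $\tau$ on $\Om_0$ immediately yields $\|T(g)\|_1\leq \|g\|_1$ and $\|T(g)\|_\ii\leq \|g\|_\ii$, so $T\in DS^+$.

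I take $f=\chi_{\Om_0}$. Because $\mu(\Om_0)=\ii$, $f^*(t)\equiv 1=\mb 1^*(t)$, so the hypothesis $\mb 1\in X$ together with full symmetry of $X$ forces $f\in X$. A short induction gives $T^k(f)=\chi_{\tau^k(\Om_0)}$, so in the $\Om_0$-coordinate
$$
M_n(T)(f)(x)=\frac{\min(n,\lfloor x\rfloor+1)}{n}
$$
in the non-atomic model (and $\min(n,m+1)/n$ at the $m$-th atom in the atomic model). For each fixed $x$ these averages tend to $0$ as $n\to\ii$, so the pointwise limit is $0$.

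The crux, and the place where infinite measure is essential, is the failure of a.u.\ convergence, via ``escape of mass to infinity''. Fix $\ep>0$ and let $E\su\Om$ satisfy $\mu(\Om\sm E)\leq\ep$. For every $n$,
$$
\mu(E\cap \tau^n(\Om_0))\ge \mu(\tau^n(\Om_0))-\ep=\ii,
$$
and on $E\cap \tau^n(\Om_0)$ one has $M_n(T)(f)\equiv 1$; hence $\|M_n(T)(f)\chi_E\|_\ii\ge 1$ for all $n$. Since any a.u.\ limit would have to coincide with the pointwise limit $0$, no a.u.\ convergence can occur, establishing the contrapositive. The only mild obstacle is the bookkeeping of the quasi-non-atomic decomposition, which is straightforward under the hypothesis.
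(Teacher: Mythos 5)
Your proposal is correct, but it handles the one substantive implication differently from the paper. The paper disposes of (iii) $\Rightarrow$ (i) by citing an external result (\cite[Theorem 4.1]{cl}), which is stronger than what is needed: over a quasi-non-atomic $\sigma$-finite infinite measure space, for \emph{every} $f\in(L^1+L^\ii)\sm\cal R_\mu$ there is a $T\in DS$ for which $M_n(T)(f)$ fails to converge even a.e. You instead prove the contrapositive of (iii) $\Rightarrow$ (ii) --- which suffices to close the cycle, since (i) $\Leftrightarrow$ (ii) --- by a self-contained construction: a truncated shift on an infinite-measure piece $\Om_0$ that is either non-atomic or purely atomic with atoms of equal mass (the quasi-non-atomic hypothesis guarantees such a piece exists), applied to $f=\chi_{\Om_0}$, which lies in $X$ because $\chi_{\Om_0}^*=\mb 1^*\equiv 1$ and $\mb 1\in X$. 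The escape-of-mass argument is sound: $M_n(T)(\chi_{\Om_0})\equiv 1$ on $\tau^{n-1}(\Om_0)$, a set of infinite measure, so $\|M_n(T)(\chi_{\Om_0})\chi_E\|_\ii\ge 1$ whenever $\mu(\Om\sm E)\le\ep$, while the everywhere-pointwise (hence any candidate a.u.) limit is $0$. Your route buys a proof that does not lean on the cited theorem and is genuinely more elementary, because you only ever need to defeat one function equimeasurable with $\mb 1$ rather than an arbitrary element of $(L^1+L^\ii)\sm\cal R_\mu$; the price is that you must do by hand the extension bookkeeping that the paper elsewhere delegates to Theorem \ref{t41} (promoting an operator defined on a piece of $\Om$ to a Dunford--Schwartz operator on all of $\Om$), which your explicit formula for $T$ accomplishes correctly.
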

\begin{proof}
Implications (i) $\Longleftrightarrow$ (ii) is Proposition \ref{p21}, while
(ii) $\Longrightarrow$ (iii) is Theorem \ref{t31}.
Implication (iii) $\Longrightarrow$ (i) follows from \cite[Theorem 4.1]{cl}, where it was established that,
in the case of quasi-non-atomic $\sigma$-finite measure space $(\Om,\cal A,\mu)$,
for a given $f\in (L^1+L^\ii)\sm \cal R_\mu$, there exists $T\in DS$ such that the averages (\ref{e11})
do not converge a.e., hence do not converge a.u.
\end{proof}

Now we shall present some examples of fully symmetric spaces $X$ such that $\mb 1\notin X$ or $\mb 1\in X$.
Recall that it is assumed that $\mu(\Om)=\ii$.

1. Let $\Phi$ be an {\it Orlicz function}, that is, $\Phi:[0,\ii)\to [0,\ii)$ is left-continuous, convex,  increasing function such that $\Phi(0)=0$ and $\Phi(u)>0$ for some $u\ne 0$ (see, for example \cite[Ch.2, \S 2.1]{es}).  Let
$$
 L^\Phi=\left \{ f \in  L^0_\mu: \  \int_\Om \Phi\left (\frac {|f|}a \right )d \mu
<\ii \text { \ for some \ } a>0 \right \}
$$
be the corresponding {\it Orlicz space},  and let
$$
\| f\|_\Phi=\inf \left \{ a>0:  \int_\Om \Phi\left (\frac {|f|}a \right )d \mu \leq 1\right\}
$$
be the {\it Luxemburg norm} in $L^\Phi$. \ It is well-known that  $(L^\Phi, \| \cdot\|_\Phi)$
is a fully symmetric space.

Since  $\mu(\Om) = \infty$, if $\Phi(u)>0$ for all $u\ne 0$, then
$\int_\Om \Phi\left (\frac 1a \cdot\mb 1 \right ) d\mu = \ii$ for each
$a>0$, hence $\mb1\notin L^\Phi$. If  $\Phi(u)=0$ for all $0\leq u< u_0$, then  $\mb1 \in L^\Phi$.

2. If $X$  is a symmetric space  with order continuous norm, then
$\mu\{|f|>\lb\}<\ii$  for all $f \in X$ and $\lb> 0$, so  $X\su \cal R_\mu $; in particular, $\mb 1 \notin X$.

3.  Let $\varphi$ be a concave function on $[0, \infty)$ with $\varphi(0) = 0$ and
$\varphi(t) > 0$ for all $t > 0$, and let
$$
\Lambda_\varphi= \left \{f \in  L^0_\mu: \ \|f \|_{\Lambda_\varphi} =
\int_0^{\infty} f^*(t) d \varphi(t) < \infty \right \}
$$
be the corresponding {\it Lorentz space}.

It is well-known that $(\Lambda_\varphi, \|\cdot\|_{\Lambda_\varphi})$
is a fully symmetric space; in addition, if $\varphi(\infty) = \ii$, then
$\mathbf 1  \notin  \Lambda_\varphi$ and if $\varphi(\infty) < \ii$, then
$\mathbf 1  \in  \Lambda_\varphi$.

Let $\varphi$ be as above, and let
$$
M_\varphi=\left \{f \in L^0_\mu: \ \|f \|_{M_\varphi} =
\sup\limits_{0<s< \infty}\frac{1}{\varphi(s)} \int_0^{s} f^*(t) d t < \infty \right \}
$$
be the corresponding {\it Marcinkiewicz space}. It is known that $(M_\varphi, \|\cdot\|_{M_\varphi})$
is a fully symmetric space such that $\mathbf 1  \notin  M_\varphi$ if and only if
$\lim\limits_{t \to \infty}\frac{\varphi(t)}{t}=0$.

\section{On strong convergence of Ces\`aro averages}

In this section we give a characterization of fully symmetric spaces for which the mean ergodic theorem is valid.

Let $ (\Om,\cal A,\mu) $ be a  $\sigma$-finite measure space. If we consider the complete Boolean
algebra $\nabla_\mu = \{e=[E]: E \in \cal A\}$ of equivalence classes of $\mu$-a.e. equal sets in $\cal A$
 (that is, when $E,G\in \cal A$ and $\mu(E\Delta G)=0$), then
$\mu(e): = \mu(E)$ is a strictly positive  measure on $\nabla_\mu$. Denote by $\nabla_\nu(0, a) = \{[E]: E\in\cal A_\nu\}$ the complete Boolean algebra of equivalence classes of $\nu$-a.e. equal sets in $((0,a), \nu)$, $0< a\leq\ii$.

A  Boolean subalgebra $\nabla_0$ in $\nabla_\mu$ is called {\it regular} if $\sup D \in \nabla_0$ for every subset $D \subseteq \nabla_0$.  If $\nabla_0$ is a regular subalgebra in $\nabla_\mu$, then clearly
$\cal A_0 = \{E \in \cal A:[E] \in \nabla_0 \}$ is a $\sigma$-subalgebra in $\cal A$ and $\nabla_0 = \{[E]: E \in \cal A_0\}$.

It is known that
there exists $e \in \nabla_\mu$ such that $e\cdot \nabla_\mu$ is a non-atomic, that is, the Boolean algebra $e\cdot\nabla_\mu$ has not atoms, and
$(\mathbf 1-e)\cdot \nabla_\mu$ is a totally atomic Boolean algebra, that is, $\mb 1-e= \sup\limits_n q_n$, where
$\{q_n\}$ is the set of atoms in the Boolean algebra $\nabla_\mu$ (see, for example, \cite[I, Ch.2, \S 2]{vl}).

Let  $\nabla_\mu$ be a non-atomic Boolean algebra. In view of \cite[Ch.2, Corollary 7.6]{bs}, we have the following.

\begin{pro}\label{p41}
There exist a regular subalgebra $\nabla_0$ in $\nabla_\mu$ and a Boolean isomorphism
$\varphi: \nabla_\nu(0, \mu(\Omega)) \to \nabla_0$ onto such that
$\mu(\varphi(e)) = \nu(e)$ for all $e\in\nabla_\nu(0, \mu(\Omega))$.
\end{pro}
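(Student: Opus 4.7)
The plan is to reduce first to the case $\mu(\Om)<\ii$ and then to glue via the $\sigma$-finite decomposition. In the finite-measure case I would construct $\varphi$ by repeated bisection. Non-atomicity of $\nabla_\mu$ guarantees that every $e\in\nabla_\mu$ with $\mu(e)>0$ splits into two disjoint elements of equal measure. Iterating inside each piece, one produces for every $k\ge 0$ a partition $\mathbf 1=\bigvee_{j=0}^{2^k-1}e_{k,j}$ with $\mu(e_{k,j})=2^{-k}\mu(\Om)$ and the refinement relations $e_{k,j}=e_{k+1,2j}\vee e_{k+1,2j+1}$. Mapping the dyadic subinterval $I_{k,j}=(j\cdot 2^{-k}\mu(\Om),(j+1)\cdot 2^{-k}\mu(\Om))$ to $e_{k,j}$ and extending $\sigma$-continuously yields a measure-preserving Boolean $\sigma$-homomorphism $\varphi\colon\nabla_\nu(0,\mu(\Om))\to\nabla_\mu$; injectivity is automatic from $\mu\circ\varphi=\nu$.

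For general $\sigma$-finite $\mu$ with $\mu(\Om)=\ii$, I would decompose $\Om=\bigsqcup_n\Om_n$ with $0<\mu(\Om_n)<\ii$, apply the finite construction inside each principal algebra $[\Om_n]\cdot\nabla_\mu$ (which is itself non-atomic and of finite measure $\mu(\Om_n)$), and assemble the resulting $\varphi_n\colon\nabla_\nu(0,\mu(\Om_n))\to[\Om_n]\cdot\nabla_\mu$ via a measure-preserving identification of $(0,\mu(\Om))$ with a disjoint union $\bigsqcup_n J_n$ of intervals of lengths $\mu(\Om_n)$. Setting $\varphi([E])=\sup_n\varphi_n([E\cap J_n])$ then produces a measure-preserving Boolean $\sigma$-isomorphism onto its image $\nabla_0\su\nabla_\mu$.

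The main obstacle is verifying that $\nabla_0$ is a \emph{regular} subalgebra, i.e., $\sup D\in\nabla_0$ for every (possibly uncountable) subset $D\su\nabla_0$, not merely for every countable one. This hinges on the countable chain condition in $\sigma$-finite measure algebras: any pairwise disjoint family of nonzero elements of $\nabla_\mu$ is at most countable, hence every subset of $\nabla_\mu$ has a countable sub-family with the same supremum. Applied to $D\su\nabla_0$ and transported through $\varphi$ (which preserves countable sups because it preserves the measure), this reduces arbitrary sups of elements of $\nabla_0$ to countable ones, which necessarily land in $\nabla_0$. Combined with the identity $\mu(\varphi(e))=\nu(e)$ built into the construction, this yields the desired regular measure-preserving copy of $\nabla_\nu(0,\mu(\Om))$ inside $\nabla_\mu$.
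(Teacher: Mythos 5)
Your construction is correct in outline, but it is a genuinely different route from the paper's: the paper does not prove Proposition \ref{p41} at all, it simply derives it from a cited result (\cite[Ch.2, Corollary 7.6]{bs}) on measure-preserving transformations of non-atomic $\sigma$-finite measure spaces onto intervals, which induces the required measure-preserving embedding of $\nabla_\nu(0,\mu(\Om))$ into $\nabla_\mu$. Your dyadic-bisection argument is essentially the standard proof hiding behind that citation, so what you gain is self-containedness at the cost of length. Two details you wave at deserve a sentence each if this were written out: (1) the ``$\sigma$-continuous extension'' from the algebra of finite unions of dyadic intervals to all of $\nabla_\nu(0,\mu(\Om))$ should be justified via the metric $d(a,b)=\nu(a\triangle b)$ --- your map is an isometry on a dense subalgebra, the Boolean operations are uniformly continuous for this metric, and $\nabla_\mu$ is complete, so the extension exists, is a homomorphism, and has closed (hence $\sigma$-complete) image; and (2) when you conclude regularity you need that a \emph{countable} supremum of elements of $\nabla_0$, computed in $\nabla_\mu$, again lies in $\nabla_0$ --- this holds because $\varphi$ preserves countable suprema (compare measures of $\varphi(\sup_n a_n)$ and $\sup_n\varphi(a_n)$), not merely because $\nabla_0$ is abstractly $\sigma$-complete. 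Your identification of the real content --- that the countable chain condition of a $\sigma$-finite measure algebra reduces arbitrary suprema to countable ones, which is what upgrades a $\sigma$-subalgebra to a regular one --- is exactly the point the paper leaves implicit, and it is handled correctly.
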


Utilizing Proposition \ref{p41} and \cite[Theorem 2.4]{cl}, we obtain the following.

\begin{cor}\label{c41}
Let $0\neq e\in\nabla_\mu$ be such that $e\cdot \nabla_\mu$ is non-atomic, and let $\nabla_0$ and $\varphi$ be as in
Proposition  \ref{p41} (with respect to the Boolean algebra $e\cdot \nabla_\mu$). Then there exists a unique algebraic  isomorphism $\Phi: L^0 ((0,\mu(\Om)),\nu) \rightarrow L^0 (\Om,\cal A_0, \mu)$ such that
\begin{enumerate}[(i)]
\item $\Phi(e)=\varphi(e)$ for all $e \in \nabla_\nu(0, \mu(\Om))$;
\item $\Phi: L^1((0,\mu(\Om)), \nu) \to  L^1(\Om,\cal A_0,\mu)$ and
$\Phi: L^\ii((0,\mu(\Om)), \nu)  \to L^\ii(\Om,\cal A_0, \mu)$ are bijective linear isometries.
\end{enumerate}
\end{cor}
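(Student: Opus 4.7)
The plan is to construct $\Phi$ in the standard bootstrap fashion from the measure-preserving Boolean isomorphism $\varphi$ of Proposition \ref{p41}: first on characteristic functions, then linearly on simple functions, then by continuity in the measure topology on all of $L^0$. For each $E\in\cal A_\nu$ contained in $(0,\mu(\Om))$, set $\Phi(\chi_E)=\chi_{\varphi([E])}$, which is unambiguous because $\varphi$ is a Boolean isomorphism; linear extension to simple functions is consistent since $\varphi$ preserves joins, meets and complements, and on this subalgebra $\Phi$ is multiplicative because $\chi_E\chi_F=\chi_{E\cap F}$ and $\varphi$ respects intersections.

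On a simple function $s=\sum_{i=1}^k a_i\chi_{E_i}$ with pairwise disjoint $E_i$, the identity $\mu(\varphi([E_i]))=\nu(E_i)$ yields $\|\Phi(s)\|_1=\|s\|_1$ whenever $s\in L^1$, while $\|\Phi(s)\|_\ii=\max|a_i|=\|s\|_\ii$. Since simple functions of finite-measure support are norm-dense in $L^1((0,\mu(\Om)),\nu)$ and every bounded measurable function is a uniform limit of bounded simple functions, $\Phi$ extends by continuity to linear isometries $\Phi:L^1\to L^1(\Om,\cal A_0,\mu)$ and $\Phi:L^\ii\to L^\ii(\Om,\cal A_0,\mu)$. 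Surjectivity uses that $\varphi$ maps $\nabla_\nu(0,\mu(\Om))$ onto $\nabla_0$ together with the regularity of $\nabla_0$ (which guarantees that $\cal A_0$ is a $\sigma$-subalgebra and hence that $\cal A_0$-simple functions are dense in $L^1(\Om,\cal A_0,\mu)$ and in $L^\ii(\Om,\cal A_0,\mu)$), so every such simple function already lies in the image of $\Phi$ on simple functions.

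To extend to $L^0$, the plan is to use that $(L^0(\Om,\cal A_0,\mu),t_\mu)$ is complete and metrizable. Given $f\in L^0((0,\mu(\Om)),\nu)$, approximate it by a $t_\nu$-Cauchy sequence $\{s_n\}$ of simple functions (e.g.\ truncations at height $n$ on the set $\{|f|\leq n\}$); the identity $\mu\{|\Phi(s)|>\dt\}=\nu\{|s|>\dt\}$, valid for simple $s$, shows $\Phi$ is uniformly $t_\nu$-to-$t_\mu$ continuous on simple functions, so $\{\Phi(s_n)\}$ is $t_\mu$-Cauchy and I define $\Phi(f)$ as its limit. Routine limit arguments, based on joint continuity of the algebraic operations in the measure topology, give well-definedness, linearity, multiplicativity and bijectivity onto $L^0(\Om,\cal A_0,\mu)$; property (i) holds by construction.

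Uniqueness is immediate: any algebra isomorphism $\Phi'$ satisfying (i) must agree with $\Phi$ on characteristic functions, hence on simple functions by linearity, and the isometry condition (ii) forces $\Phi'$ to be $t_\mu$-continuous, so $\Phi'=\Phi$ on the $t_\nu$-dense set of simple functions and therefore everywhere. The step I expect to require the most care is verifying that multiplicativity survives the measure-theoretic extension, since one must justify passing $\Phi$ through products of unbounded functions; this is exactly the content of the invoked \cite[Theorem 2.4]{cl}, and the role of Proposition \ref{p41} here is simply to furnish a Boolean isomorphism satisfying the measure-preservation hypothesis of that theorem.
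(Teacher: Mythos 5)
The paper does not actually construct $\Phi$: its ``proof'' of Corollary \ref{c41} is a one-line appeal to Proposition \ref{p41} together with \cite[Theorem 2.4]{cl}, and your proposal is essentially a reconstruction of that cited theorem by the standard bootstrap (characteristic functions $\to$ simple functions $\to$ completion). The $L^1$ and $L^\ii$ parts of your argument are sound: the isometry on simple functions from $\mu(\varphi([E]))=\nu(E)$, the density arguments, and the surjectivity via regularity of $\nabla_0$ all work.

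The genuine gap is in the extension to $L^0$. The topology $t_\nu$ used in this paper is \emph{global} convergence in measure, with neighborhoods $\{f:\nu\{|f|>\dt\}\leq\ep\}$, and on the infinite measure space $(0,\mu(\Om))=(0,\ii)$ simple functions are \emph{not} $t_\nu$-dense in $L^0$. Your own truncation scheme already fails: for $f(t)=t$ one has $\nu\{|f-f\chi_{\{|f|\leq n\}}|>\dt\}=\nu\{|f|>n\}=\ii$ for every $n$, so the $s_n$ do not converge to $f$ in $t_\nu$ and there is no Cauchy sequence of simple functions to push through $\Phi$. For the same reason the ``routine limit arguments'' for multiplicativity are not routine: multiplication is not jointly continuous for the global measure topology (take $g_n=\tfrac1n\mb 1\to 0$ and $f$ with $\nu\{|f|>n\dt\}=\ii$; then $fg_n\not\to 0$ in $t_\nu$). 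The standard repairs are either (a) to use \emph{local} convergence in measure (convergence in measure on every set of finite measure), for which simple functions are dense and multiplication is jointly sequentially continuous on the $\sigma$-finite space, or (b) to bypass topology altogether and define $\Phi(f)$ for $f\geq 0$ spectrally, as the unique element of $L^0(\Om,\cal A_0,\mu)$ whose spectral idempotents $[\{\Phi(f)>\lb\}]$ equal $\varphi([\{f>\lb\}])$; this uses only that $\varphi$ is a ($\sigma$-)complete Boolean isomorphism and gives multiplicativity and uniqueness at once. Relatedly, your uniqueness argument leans on (ii) to get $t_\mu$-continuity of a competing $\Phi'$ on $L^0$, but (ii) only controls $\Phi'$ on $L^1$ and $L^\ii$; uniqueness on all of $L^0$ should instead come from the fact that an algebra isomorphism of $L^0$ preserves idempotents and order (squares are positive), hence is determined by its values on $\nabla_\nu(0,\mu(\Om))$, i.e. by (i).
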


In what follows, $T\in DS (\Om,\cal A,\mu)$ will mean that $T$ is a Dunford-Schwartz operator in
$L^1(\Om,\cal A,\mu)+L^\ii(\Om,\cal A,\mu)$.

If $E\in\cal A$ and $\cal A_E = \{A\cap E: A \in \cal A\}$, then it is clear that $(E, \mathcal A_E, \mu)$ is a $\sigma$-finite measure space. The next property of Dunford-Schwartz operators can be found in \cite[Corollary 2.1]{cl}.

\begin{teo}\label{t41}
Let $0 \neq e=[E] \in \nabla_\mu$, and let $\nabla_0$ be a regular subalgebra in $e\cdot \nabla_\mu$ such that
$(\Om,\cal A_0,\mu)$ is a $\sigma$-finite measure space.
If $T\in DS(E, \mathcal A_0, \mu)$, then there exists $\wh T\in DS(\Om,\cal A,\mu)$ such that
$$
\widehat T(g) = T(g) \text{ \ \ and \ \ } M_n(\wh T)(g) =  M_n(T)(g)
$$
for all $g \in L^1(E,\cal A_0,\mu)+L^{\ii}(E,\cal A_0,\mu)$ and $n\in\mathbb N$.
\end{teo}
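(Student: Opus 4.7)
The plan is to build $\wh T$ by sandwiching $T$ between two natural Dunford-Schwartz contractions: a ``restriction-and-conditioning'' map going from $(\Om,\cal A,\mu)$ down to $(E,\cal A_0,\mu)$, and the isometric extension by zero going back. Since $(\Om,\cal A_0,\mu)$ is $\sigma$-finite and every element of $\nabla_0$ lies below $e=[E]$, the conditional expectation
$$
\cal E(\cdot\,|\,\cal A_0):L^1(E,\cal A_E,\mu)+L^\ii(E,\cal A_E,\mu)\to L^1(E,\cal A_0,\mu)+L^\ii(E,\cal A_0,\mu)
$$
is well defined and is a positive contraction in both the $L^1$- and $L^\ii$-norms. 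Define
$$
P(f)=\cal E(\chi_E f\,|\,\cal A_0),\ \ f\in L^1(\Om,\cal A,\mu)+L^\ii(\Om,\cal A,\mu).
$$
Then $\| P(f)\|_1\leq\| f\|_1$ and $\| P(f)\|_\ii\leq\| f\|_\ii$, as $P$ is the composition of two $DS$-type contractions. Let $J$ denote extension by zero from $E$ to $\Om$; clearly $J$ is an isometry into both $L^1(\Om)$ and $L^\ii(\Om)$. Set $\wh T=J\circ T\circ P$; being a composition of $L^1$- and $L^\ii$-contractions, $\wh T\in DS(\Om,\cal A,\mu)$.

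To verify the required identities, observe that for any $g\in L^1(E,\cal A_0,\mu)+L^\ii(E,\cal A_0,\mu)$ the function $J(g)=g\chi_E$ is already $\cal A_0$-measurable, so $P(J(g))=\cal E(g\,|\,\cal A_0)=g$. Under the identification $g\leftrightarrow J(g)$, this gives $\wh T(g)=J(T(P(J(g))))=J(T(g))=T(g)$. A short induction based on $P\circ J=\mathrm{id}$ on $L^1(E,\cal A_0,\mu)+L^\ii(E,\cal A_0,\mu)$ then yields $\wh T^k(J(g))=J(T^k(g))$ for every $k\ge 0$, whence $M_n(\wh T)(g)=M_n(T)(g)$ for all $n\in\mathbb N$.

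The one delicate point is the definition and bi-contractivity of $\cal E(\cdot\,|\,\cal A_0)$ on $L^1+L^\ii$ over a $\sigma$-finite (rather than probability) space. For the $L^1$-piece this is standard, by exhausting $(E,\cal A_0,\mu)$ with an increasing sequence of $\cal A_0$-measurable sets of finite measure and using that conditional expectation commutes with monotone limits; the $L^\ii$-bound is immediate from the pointwise inequality $|\cal E(h\,|\,\cal A_0)|\leq\cal E(|h|\,|\,\cal A_0)\leq\| h\|_\ii\cdot\mb 1$. Once one verifies that the extension to $f=g+h\in L^1+L^\ii$ is independent of the decomposition, the rest of the argument is routine.
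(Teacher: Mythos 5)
The paper gives no proof of Theorem \ref{t41}; it is imported verbatim from \cite[Corollary 2.1]{cl}, so there is no internal argument to compare yours against. Your construction $\wh T=J\circ T\circ P$ with $P(f)=\cal E(\chi_E f\,|\,\cal A_0)$ is the natural extension-by-conditional-expectation device and is correct: the hypothesis that $\mu$ restricted to $\cal A_0$ is $\sigma$-finite is precisely what makes $\cal E(\cdot\,|\,\cal A_0)$ well defined and simultaneously an $L^1$- and $L^\ii$-contraction, the identity $P\circ J=\mathrm{id}$ on $L^1(E,\cal A_0,\mu)+L^\ii(E,\cal A_0,\mu)$ yields $\wh T^k\circ J=J\circ T^k$ by induction (using that $T$ maps this space into itself), and the claim for $M_n$ follows by averaging. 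I see no gap; your treatment of the only delicate point, the definition of conditional expectation on $L^1+L^\ii$ over a $\sigma$-finite base, is the standard exhaustion argument and is adequate.
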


We say that a fully symmetric space $X=X(\Om)$ possesses the {\it mean ergodic theorem property} (writing
$X\in(\text{MET})$) if the averages $M_n (T)$ converge strongly in $X$ for any $T\in DS(\Om,\cal A,\mu)$.

\begin{teo}\label{t42} If $(\Om,\cal A,\mu)$ is quasi-non-atomic, $\mu(\Om) = \ii$, and $X=(X(\Om), \|\cdot\|_X)$
is a fully symmetric space such that $X\su L^1(\Om)$, then $X \notin (\text{MET})$.
\end{teo}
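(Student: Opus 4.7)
The plan is to exhibit a Dunford-Schwartz operator $T\in DS(\Om,\cal A,\mu)$ and a function $f_0\in X$ for which $\{M_n(T)(f_0)\}$ fails to converge in the norm of $X$. The operator will act as a ``shift'' on a sequence $\{E_k\}_{k=0}^\ii$ of pairwise disjoint measurable subsets of $\Om$ of common measure $1$, and the test function will be $f_0=\chi_{E_0}\in L^1\cap L^\ii\su X$.

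Producing $\{E_k\}$ and $T$ splits into two cases according to the quasi-non-atomic hypothesis. In Case A the non-atomic component $e\cdot\nabla_\mu$ has $\mu(e)=\ii$; Corollary \ref{c41} then furnishes an algebra isomorphism $\Phi:L^0((0,\ii),\nu)\to L^0(\Om,\cal A_0,\mu)$ that is an isometry on both $L^1$ and $L^\ii$, and we set $\chi_{E_k}=\Phi(\chi_{[k,k+1)})$. Let $T_0\in DS((0,\ii),\nu)$ be the unit shift $T_0f(x)=f(x-1)\chi_{[1,\ii)}(x)$; applying Theorem \ref{t41} to the conjugate $\Phi T_0\Phi^{-1}\in DS(\Om,\cal A_0,\mu)$ yields an extension $T\in DS(\Om,\cal A,\mu)$ with $T^k\chi_{E_0}=\chi_{E_k}$ for all $k\ge 0$. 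In Case B the non-atomic part of $\Om$ has finite measure, so $\mu(\Om)=\ii$ forces the atomic part to consist of infinitely many atoms $q_1,q_2,\ldots$ of equal measure, which we rescale to $1$. Taking $E_k=q_{k+1}$ and defining $T\bigl(\sum_k a_k\chi_{q_k}\bigr)=\sum_k a_k\chi_{q_{k+1}}$ on the atomic part (and $T\equiv 0$ on its complement) produces a DS operator by direct inspection, again satisfying $T^k\chi_{E_0}=\chi_{E_k}$.

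In either case the iterates of $\chi_{E_0}$ have pairwise disjoint supports, so, writing $F_n=\bigsqcup_{k=0}^{n-1}E_k$ with $\mu(F_n)=n$, one has $M_n(T)(\chi_{E_0})=\tfrac1n\chi_{F_n}$, whose non-increasing rearrangement is $\tfrac1n\chi_{[0,n]}$. Full symmetry of $X$ gives $\|M_n(T)(\chi_{E_0})\|_X=\tfrac1n\|\chi_{[0,n]}\|_X$. The inclusion $X\su L^1(\Om)$ is continuous (by the closed graph theorem, since norm convergence in either space yields convergence in measure), so there is $C>0$ with $\|g\|_1\le C\|g\|_X$ for all $g\in X$; applied to $\chi_{[0,n]}$ this gives $\|\chi_{[0,n]}\|_X\ge n/C$ and hence $\|M_n(T)(\chi_{E_0})\|_X\ge 1/C$ for every $n$. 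On the other hand $0\le M_n(T)(\chi_{E_0})\le 1/n$ pointwise, so $M_n(T)(\chi_{E_0})\to 0$ a.e. A strong limit $\wh f$ in $X$ would then also be a limit in $L^1+L^\ii$, forcing $\wh f=0$ a.e.\ and contradicting $\|\wh f\|_X=\lim_n\|M_n(T)(\chi_{E_0})\|_X\ge 1/C>0$. The main obstacle is the unified construction of $T$ across the non-atomic and purely atomic subcases and the verification that such a shift genuinely lives in $DS(\Om,\cal A,\mu)$; Corollary \ref{c41} and Theorem \ref{t41} package the required technicalities in Case A, while Case B requires only a direct check.
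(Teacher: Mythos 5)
Your proposal is correct and follows essentially the same route as the paper: a unit shift whose iterates of a characteristic function have pairwise disjoint supports, transferred to $(\Om,\cal A,\mu)$ via Corollary \ref{c41} and Theorem \ref{t41} in the non-atomic case (or built directly on the equal-measure atoms), combined with the continuity of the embedding $X\su L^1(\Om)$. The only cosmetic difference is the concluding step: the paper shows $\{M_n(T)(\chi_{E_0})\}$ fails to be Cauchy in $\|\cdot\|_1$ by computing $\|M_{2n}(T)(\chi_{E_0})-M_n(T)(\chi_{E_0})\|_1=1$, whereas you bound $\|M_n(T)(\chi_{E_0})\|_X$ below by $1/C$ while the only possible limit (in $L^1+L^\ii$, hence in $X$) is $0$; both arguments are valid.
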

\begin{proof}
Assume first $(\Om,\cal A,\mu)= ((0, \ii), \nu)$ and consider the operator $T \in DS((0,\ii),\nu)$ defined by
$T(f)(t)= f(t-1)$ if $t> 1$ and $T(f)(t) = 0$ if $t \in (0,1]$. Since
\begin{equation*}
\begin{split}
\|M_{2n}(T)(\chi_{(0,1]}) - M_{n}(T)(\chi_{(0,1]})\|_1 &= \left\|\frac1{2n}\chi_{(0,2n]}-\frac1{n}\chi_{(0,n]}\right\|_1\\
&=n\left (\frac1n-\frac1{2n}\right )+n\frac1{2n}=1,
\end{split}
\end{equation*}
it follows that  the averages $M_n(T)(\chi_{(0,1]}))$ do not converge in the norm $\|\cdot\|_1$.
Therefore $L^1((0, \ii),\nu)\notin(\text{MET})$.

It is known \cite[Ch.6, \S 6.1, Proposition 6.1.1]{rgmp} that an embedding $X_1 \subset X_2$ of symmetric spaces
$(X_1, \|\cdot\|_ {X_1}) $ and $(X_2, \|\cdot\|_ {X_2})$ is continuous, that is, there is a constant $c> 0$ such that
$ \| f \|_{X_2} \leq c \| f \|_{X_1} $ for all $f \in X_1$.
Since, by the assumption,  $X=X((0, \ii),\nu)\su L^1((0, \ii),\nu)$, it follows that
$\| f \|_1 \leq c \| f \|_X$ for all $f\in X$ and some constant $c >0$. Consequently, the sequence $\{M_n(T)(\chi_{(0,1]})\}$
can not converge strongly in the space $X$, and we conclude that $X((0,\ii),\nu)\notin (\text{MET})$.

Assume now that $(\Om,\cal A,\mu)$ is non-atomic. By Corollary \ref{c41},
there exist a regular subalgebra $\nabla_0$ in $\nabla_\mu$ and an algebraic  isomorphism $\Phi: L^0 ((0,\ii),\nu)\to L^0 (\Om,\cal A_0,\mu)$ such that $\mu(\Phi(e)) = \nu(e)$ for all $e\in\nabla_\nu(0, \mu(\Om))$ and $\Phi: L^1((0,\infty), \nu) \to  L^1(\Om,\cal A_0,\mu)$ and $\Phi: L^\ii((0,\ii), \nu)  \to L^\ii(\Om,\cal A_0, \mu)$ are bijective linear isometries.
Therefore $\wt T = \Phi\circ T\circ \Phi^{-1} \in DS(\Om,\cal A_0,\mu)$, and the sequence $\{M_n(\wt T)(\Phi(\chi_{(0,1]}))\}$
does not converge in the space $(X(\Om,\cal A_0,\mu), \|\cdot\|_{X(\Om,\cal A_0,\mu)})$. Note that $(\Om,\cal A_0,\mu)$ is a $\sigma$-finite measure space.

By Theorem \ref{t41}, there exists $\wh T\in DS(\Om,\cal A,\mu)$ such that $\wh T(g) = \wt T(g)$ and
$M_n(\wh T)(g) =  M_n(\wt T)(g)$ for all $g\in L^1(\Om,\cal A_0, \mu)+L^\ii(\Om,\cal A_0,\mu)$ and $n\in\mathbb N$.
Thus, the sequence $\{M_n(\wh T)(\Phi(\chi_{(0,1]}))\}$ does not converge in the space
$X=X(\Om,\cal A,\mu)$, hence $X\notin(\text{MET})$.

Next, let $(\Om,\cal A,\mu)$ be a totally atomic infinite measure space with the atoms of equal measures.
In this case $L^1(\Om) = l^1, \ L^\ii(\Om)= l^\ii$ and $l^1 \su X(\Om) \su l^\ii$, which, by the assumption, implies that
$X=X(\Om) =l^1 $. Define $T\in DS(\Om,\cal A,\mu)$ by $T(\{\xi_n\}_{n = 1}^\ii) = \{0,\xi_1,\xi_2, \dots \}$ if
$\{\xi_n \}_{n = 1}^\ii \in l^\ii$. If $e_1=\{1,0,0,\dots \}$, then we have
$$
\|M_{2n}(T)(e_1) - M_n(T)(e_1) \|_1 = $$
$$=\left\|\frac1{2n}\{\underbrace{1,1,\dots,1}_{2n},0,0,\dots\}-\frac1n\{\underbrace
{1,1,\dots,1}_n,0,0,\dots\}
\right\|_1=1,
$$
implying that the sequence  $\{M_n(T)(e_1)\}$ does not converge in the norm $\|\cdot\|_1$. Since $X=l^1 $ and
$\| f \|_X \leq \| f\|_1 $, $f \in l^1$, it follows that the norms $\| \cdot \|_X$ and $\| \cdot \|_1$ are  equivalent. Consequently, the sequence $\{M_n(T)(e_1)\}$ is not convergent in the norm $\|\cdot\|_X$ also,
hence $X\notin(\text{MET})$.

Assume now that $(\Om,\cal A,\mu)$ is an arbitrary quasi-non-atomic infinite measure space. As noted above, there exists
$e =[E]\in\nabla_\mu$ such that $e\cdot \nabla_\mu$ is a non-atomic and
$(\mb 1-e)\cdot \nabla_\mu$ is a totally atomic Boolean algebra.

Let $\mu(E) <\ii$. Since $\mu(\Om) = \ii$, it follows that $(\Om\sm E,\cal A_{\Om\sm E},\mu)$ is a totally atomic
$\sigma$-finite infinite measure space with the atoms of the same measure. According to what has been proven above,
we have $(\Om\sm E,\cal A_{\Om\sm E},\mu), \| \cdot \|_X)\notin (\text{MET})$.
Further, by Theorem \ref{t41}, there exists $\wh T\in DS(\Om,\cal A,\mu)$ such that
$\wh T(g) = T(g)$ and $ M_n(\wh T)(g) =  M_n(T)(g)$ for all $g\in L^1(\Om\sm E,\cal A_{\Om\sm E},\mu)+
L^\ii(\Om\sm E,\cal A_{\Om\sm E},\mu)$ and $n \in \mathbb N$. Therefore $X(\Om,\cal A,\mu) \notin (\text{MET})$.

If $\mu(E)=\ii$, then, as we have shown, $(X(E,\cal A_E, \mu), \| \cdot \|_X)  \notin (MET)$. In particular, there exist
$T \in DS(E,\cal A_E, \mu)$ and $f\in X(E,\cal A_E, \mu)$ such that the sequence $\{M_n(T)(f)\}$ is not convergent in the norm  $\|\cdot\|_X$. By Theorem \ref{t41}, there exists $\wh T\in DS(\Om,\cal A,\mu)$ such that
$\wh T(g) = T(g)$ and $M_n(\wh T)(g)=M_n(T)(g)$ for all $g \in L^1(E,\cal A_E, \mu)+L^\ii(E,\cal A_E,\mu) $ and
$n \in \mathbb N$. Therefore $X(\Om,\cal A,\mu)\notin (\text{MET})$.
\end{proof}

Recall that a fully symmetric space $(X, \|\cdot\|_X)$ on $(\Om,\cal A,\mu)$, which is generated by a  fully symmetric space
$(X(0,\ii),\| \cdot \|_{X(0,\ii)})$, has order-continuous norm if and only if the space
$(X(0,\ii),\| \cdot \|_{X(0,\ii)})$ is separable.

The next theorem gives another condition under which a fully symmetric space $X(\Om,\cal A,\mu)$ does not belong to
$(\text{MET})$.

\begin{teo}\label{t43}
Let $(\Om,\cal A,\mu)$ be a quasi-non-atomic $\sigma$-finite infinite measure space. If $X=(X(\Om), \|\cdot\|_X)$ is a fully symmetric space generated by a non-separable fully symmetric space $(X(0,\ii),\| \cdot \|_{X(0,\ii)})$,
then $X\notin (\text{MET})$.
\end{teo}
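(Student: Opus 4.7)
The plan is to follow the reduction strategy in the proof of Theorem \ref{t42}: apply Corollary \ref{c41} and Theorem \ref{t41} to reduce to the non-atomic case $(\Om,\cal A,\mu)=((0,\ii),\nu)$, with the purely atomic and mixed cases treated by analogous constructions using the one-sided shift on $\mb N$ together with the decomposition $\Om=E\cup(\Om\sm E)$. In this reduced setting one needs to exhibit $T\in DS((0,\ii),\nu)$ and $f\in X$ for which $\{M_n(T)(f)\}$ is not Cauchy in $\|\cdot\|_X$; I will split the argument on whether $\mb 1\in X$.

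If $\mb 1\in X$, I would take $T$ to be the right unit shift $T(g)(t)=g(t-1)\chi_{(1,\ii)}(t)\in DS$ and $f=\mb 1$. Since $T^k(\mb 1)=\chi_{(k,\ii)}$, a short calculation gives $M_n(T)(\mb 1)(t)=\min(n,\lceil t\rceil)/n$, which equals $1$ on the infinite-measure set $(n-1,\ii)$. Hence $M_n(T)(\mb 1)^*=\mb 1$ and $\|M_n(T)(\mb 1)\|_X=\|\mb 1\|_X>0$ for every $n$, while $M_n(T)(\mb 1)\to 0$ pointwise; any strong $X$-limit would have to be $0$ in measure, contradicting the constant positive norm.

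If $\mb 1\notin X$, then $X\su\cal R_\nu$ by Proposition \ref{p21}. The order-continuous part $\overline{L^1\cap L^\ii}^X$ is rearrangement invariant, so non-separability of $X$ furnishes a decreasing $f=f^*\ge 0$ in $X$ and $\ep>0$ with $\|f-g\|_X\ge\ep$ for every $g\in L^1\cap L^\ii$. Applying this to $g=(f\wedge M)\chi_{(0,N)}\in L^1\cap L^\ii$ and using the triangle inequality yields $\|(f-M)_+\|_X+\|f\chi_{(N,\ii)}\|_X\ge\ep$ for all $M,N$; by monotonicity of both terms in $M$ and $N$ respectively, at least one of the limits $\lim_M\|(f-M)_+\|_X$ and $\lim_N\|f\chi_{(N,\ii)}\|_X$ is at least $\ep/2$. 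In the "obstruction at $\ii$" case $\|f\chi_{(N,\ii)}\|_X\ge\ep/2$ for every $N$, take $T$ again the right unit shift; monotonicity of $f$ yields, for $t>n-1$,
\[
M_n(T)(f)(t)=\frac{1}{n}\sum_{k=0}^{n-1}f(t-k)\ge f(t),
\]
so $\|M_n(T)(f)\|_X\ge\|f\chi_{(n-1,\ii)}\|_X\ge\ep/2$ while $M_n(T)(f)\to 0$ a.e., ruling out strong convergence. In the "obstruction at $0$" case $\|(f-M)_+\|_X\ge\ep/2$ for every $M$ (which forces $f(0+)=\ii$), pick $a<\ii$ large enough that for some $M_0$ the spike $\{f>M_0\}\su(0,a)$; a direct estimate comparing $f\chi_{(0,a)}-g$ to $(f-2\|g\|_\ii)_+$ on $\{f>2\|g\|_\ii\}$ shows $f\chi_{(0,a)}\notin\overline{L^\ii(0,a)}^{X(0,a)}$, so $X(0,a)$ is non-separable. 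The Veksler--Ferleger result \cite{vf1} on $((0,a),\nu)$ provides $T_0\in DS(0,a)$ and $\wt f\in X(0,a)$ with $\{M_n(T_0)(\wt f)\}$ not convergent in $X(0,a)$; extending $T_0$ to $T\in DS((0,\ii),\nu)$ by Theorem \ref{t41}, and noting that supports of $\wt f$ and its iterates lie in $(0,a)$ so the $X$-norm agrees with the $X(0,a)$-norm, the failure persists in $X$.

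The delicate part will be the case $\mb 1\notin X$: one must establish the exhaustiveness of the dichotomy via the two-term estimate above, and handle the obstruction at $0$ by reducing to a finite subinterval and invoking the non-trivial external result \cite{vf1}, then reinstating the operator to $(0,\ii)$ via Theorem \ref{t41}.
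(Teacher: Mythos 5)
Your proposal is correct in substance and, while it keeps the paper's overall reduction scheme (non-atomic case on $((0,\ii),\nu)$ via Corollary \ref{c41}, then the totally atomic and mixed cases via the shift on sequences and Theorem \ref{t41}), it handles the core non-atomic case by a genuinely different route. The paper argues in one stroke: non-separability of $X(0,\ii)$ is localized to a finite interval $(0,a)$ by \cite[Ch.II, \S 4, Theorem 4.8]{kps}, and then \cite[Theorem 2.5.1]{vf} supplies the counterexample on $(0,a)$, extended trivially to $(0,\ii)$. You instead split into three sub-cases --- $\mb 1\in X$; $\mb 1\notin X$ with the non-separability ``at $\ii$''; $\mb 1\notin X$ with the non-separability ``at $0$'' --- and only the last requires the external result of \cite{vf1,vf}; the first two are disposed of by the explicit translation shift, which is precisely the continuous analogue of the argument the paper itself uses in the totally atomic case. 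The price is the two-term distance estimate and its dichotomy, which is sound ($f-g=(f-M)_+\chi_{(0,N)}+f\chi_{[N,\ii)}$ for $g=(f\wedge M)\chi_{(0,N)}$, both terms monotone in $M$, resp.\ $N$); the payoff is self-containedness in two of the three sub-cases and, more importantly, transparent coverage of spaces such as $L^1+L^\ii$ itself, for which $\mb 1\in X$ and every restriction $X(0,a)$ is separable, so that the localization to a finite interval is simply unavailable and the paper's one-stroke argument does not apply literally. Two points should be written out if you expand the sketch: (a) the rearrangement invariance of $\ol{L^1\cap L^\ii}^{\,X}$ (needed to take the separating element of the form $f=f^*$) rests on the standard but not completely trivial estimate $\mathrm{dist}_X(f,L^1\cap L^\ii)\leq\|(f^*-M)_+\|_X+\|f^*\wedge\dt\|_X$, valid because $X\su\cal R_\nu$ in that case; and (b) in the mixed case one must still check, as the paper does via \cite[Ch.II, Section 3.6]{lsz}, that non-separability of $X(0,\ii)$ passes to the symmetric sequence space sitting on the atomic part before your shift construction can be run there.
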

\begin{proof}
Assume first that $(\Om,\cal A,\mu)= ((0,\ii),\nu)$. Since $(X(0,\ii),\|\cdot\|_{X(0, \infty)})$ is not separable,
it follows that there exists $a>0$, such that the symmetric space $(X(0,a), \|\cdot\|_{X(0, a)})$
also is not separable \cite[Ch.II, \S 4, Theorem 4.8]{kps}. Therefore, by \cite[Theorem 2.5.1]{vf}, there are a function
$f_0 \in X(0, a)\sm\ol{L^\ii(0, a)}^{\|\cdot\|_{X(0, a)}}$ and a Dunford-Schwartz operator $T_0\in DS((0,a),\nu)$ such that the averages $M_{n}(T_0)(f_0)$ do not converge in the norm $\|\cdot\|_{X(0, a)}$.

Define a Dunford-Schwartz operator $T\in DS((0,\ii),\nu)$ by
$$
T(g) = T_0(g\cdot\chi_{(0, a)}), \ \ g\in L^1(0,\ii)+L^\ii(0,\ii).
$$
If we set $f = f_0\cdot \chi_{(0, a)}+0 \cdot\chi_{[a,\ii)} $, then $f \in X(0, \infty)$ and $M_n(T)(f)=M_n(T_0)(f_0)$
for every $n$. Consequently, the sequence $\{M_n(T)(f)\}$ does not converge strongly in $X(0, \infty)$.

Next, let $(\Om,\cal A,\mu)$ be non-atomic. By Corollary \ref{c41}, there exist a regular subalgebra
$\nabla_0$ in $\nabla_\mu$ and an algebraic isomorphism $\Phi: L^0 ((0,\ii),\nu) \to L^0(\Om,\cal A_0,\mu)$ such that
$\mu(\Phi(e)) = \nu(e)$ for all $e\in\nabla_\nu(0, \mu(\Om))$ and $\Phi: L^1((0,\infty), \nu) \to  L^1(\Omega, \mathcal A_0, \mu)$ \ and \ $\Phi: L^\ii((0,\infty), \nu)  \to L^\ii(\Omega, \mathcal A_0, \mu)$ are bijective linear isometries. In particular,  $(\Om,\cal A_0,\mu)$ is $\sigma$-finite. According to what has been proved above, there exists
$T\in DS((0,\ii),\nu)$ such that the averages $M_n(T)$ do not converge strongly in
$X(0,\ii)$. It is clear then that $\wt T = \Phi\circ T\circ \Phi^{-1} \in DS(\Om,\cal A_0,\mu)$ and the averages
$M_n(\wt T)$ do not converge strongly in $X(\Om,\cal A_0,\mu)$.

By Theorem \ref{t41}, there exists $\wh T\in DS(\Om,\cal A,\mu)$ such that
$\wh T(g)=\wt T(g)$ and $M_n(\wh T)(g)=M_n(\wt T)(g)$ for all $g\in L^1(\Om,\cal A_0,\mu)+L^\ii(\Om,\cal A_0, \mu) $ and $n \in \mathbb N$. It follows then that the averages $M_n(\wh T)$ do not converge strongly in
$X=X(\Om,\cal A,\mu)$, hence $X\notin (\text{MET})$.

Now, let $(\Om,\cal A,\mu)$ be totally atomic infinite measure space with the atoms of equal measures.
In this case $l^1 \subseteq X=X(\Om) \subseteq l^\ii$, and $\cal R_\mu = c_0$, the fully symmetric space of converging
to zero sequences $f=\{\xi_n\}_{n = 1}^\ii$ of real numbers with respect to the norm
$\|f\|_\ii=\sup\limits_{n \in \mathbb N} |\xi_n|$.

If  there is $f\in X\sm c_0$, then $f^*\ge\al \mathbf1$ for some $\al>0$, where $\mb 1 = \{1,1,\dots\}$, hence
$\mb 1 \in X$ and $X=l^\ii$. Therefore, if $X$ is a symmetric sequence space, then either $X\su c_0$ or
$X=l^\ii$. Since a.u. convergence in $l^\ii$ and $c_0$ coincides with the convergence in the norm $\|\cdot\|_\ii$, Theorem \ref{t34} implies that $l^\ii\notin (\text{MET})$.

Let now $X \su c_0$. Since, by \cite[Ch.II, Section 3.6]{lsz}, a symmetric space $X(\Om) \su c_0$ is separable if and only if the symmetric space $X(0,\ii)$ is separable, but $X(0,\ii)$ is not separable, it follows that $X$ is not a separable space.
Therefore, the norm $\|\cdot\|_X$  is not order continuous, implying that there exists
$f=\{\xi_n\}_{n = 1}^\ii =\{ \xi^*_n\}_{n = 1}^\ii\in X$ such that
\begin{equation}\label{e7}
 \xi_n  \downarrow  0 \text{ \ \ and \ \ } \|\{\underbrace{0,0,\dots,0}_n,\xi_{n+1},\xi_{n+2},\dots\}\|_X \downarrow\al >0.
\end{equation}
Let $T \in DS(\Om,\cal A,\mu)$ be defined as $T(\{\eta_n\}_{n = 1}^\ii) = \{0,\eta_1,\eta_2, \dots \}$ whenever
$\{\eta_n \}\in l^\ii$. Then $T^k(f) = \{\underbrace{0,0,\dots,0}_k,\xi_1,\xi_2,\dots\}$, so
\begin{equation*}
\begin{split}
\{\eta_m^{(n)}\}_{m=1}^\ii:=\sum_{k=0}^{n-1}T^k&(f)
=\{\xi_1,\xi_1+\xi_2,\dots,\xi_1+\xi_2+\dots\xi_n,\xi_2+\xi_3+\dots+\xi_{n+1},\\
&\xi_3+\xi_4+\dots+ \xi_{n+2}, \dots ,\xi_{m-n+1}+\xi_{m-n+2}+\dots+ \xi_m,\dots\},
\end{split}
\end{equation*}
that is,
$$
\eta_m^{(n)}=\xi_{m-n+1}+\xi_{m-n+2}+\dots+\xi_m \text{ \ \ if \ \ } m\ge n
$$
and
$$ \eta_m^{(n)}  = \xi_1+\xi_2+\dots\xi_m \text{ \ \ if \ \ } 1 \leq m< n.
$$
Since $\xi_n\downarrow  0 $, it follows that $\frac1n \sum\limits_{k=0}^{n-1}\xi_k \to 0$ as $n\to\ii$. Consequently,
$$0 \leq \frac1n \ \eta_m^{(n)}= \frac1n \ (\xi_{m-n+1}+\xi_{m-n+2}+\dots+ \xi_{m}) \leq
\frac1n \sum_{k=0}^{m}\xi_k \to 0
$$
as $n\to \ii$ for any fixed $m\in \mathbb N$. Therefore, the sequence $\{M_n(T)(f)\}$ converges to zero coordinate-wise.

Suppose that there exists $\wh f\in X$ such that $\|M_n(T)(f)-\wh f\|_X \to 0$.
Then the sequence $\{M_n(T)(f)\}$ converges to $\wh f$ coordinate-wise, implying that $\wh f= 0$.

On the other hand, as $\xi_n\downarrow  0$, we have
\begin{equation*}
\begin{split}
M_n(T)(f)=\bigg \{&\frac1n\xi_1,\frac1n(\xi_1+\xi_2),\dots,\frac1n(\xi_1+\xi_2+\dots+\xi_n),\frac1n(\xi_2+\xi_3+\dots+ \xi_{n+1}), \\
&\frac1n(\xi_3+\xi_4+\dots+\xi_{n+2}),\dots, \frac1n(\xi_{m-n+1}+\xi_{m-n+2}+\dots+ \xi_m),\dots\bigg\} \\
&\ge \{\underbrace{0,0,\dots,0}_n,\xi_{n+1},\xi_{n+2},\dots\} \geq 0.
\end{split}
\end{equation*}
Thus, by (\ref{e7}),
$$
\|M_n(T)(f)\|_X \ge \|\{\underbrace{0,0,\dots,0}_n,\xi_{n+1},\xi_{n+2},\dots\}\|_X \ge\al >0,
$$
implying that the sequence $\{M_n(T)(f)\}$ is not convergent with in the norm $\|\cdot\|_X$, that is, $X\notin (\text{MET})$.

Repeating the ending of the proof of Theorem \ref{t42}, we conclude that $X(\Om)\notin (\text{MET})$ for any quasi-non-atomic $\sigma$-finite infinite measure space $(\Om,\cal A,\mu)$.

\end{proof}

Let $(X, \|\cdot\|_X )$ be a symmetric space on $((0,\ii),\nu)$. The {\it fundamental function} of $X $ is defined by
$\varphi_X(t) = \|\chi_{(0,t]}\|_X$. It is known that $\varphi_X(t)$ is a {\it quasi-concave} function (see \cite[Ch.II, \S 4,
Theorem 4.7]{kps}); in particular, $\varphi_X(t)$ increases, while
the function $\frac{\varphi_X(t)}t$ decreases \cite[Ch.II, \S 1, Definition 1.1]{kps}. Consequently, there exist the limits
$$
\al(X)= \lim\limits_{t\to\ii}\frac{\varphi_X(t)}t \text{ \ \ and \ \ } \bt(X) = \lim\limits_{t \to 0^+}\varphi_X(t)
=\varphi_X(+0).
$$
Note that
$$
\al (L^1)= 1,\ \ \bt (L^\ii)= 1,\ \ \al(L^p)=0, \ 1 < p \leq\ii,\ \ \bt(L^p)= 0, \ 1 \leq p <\ii.
$$
We need the following necessary and sufficient conditions for an embedding of a symmetric space $X(\Om)$
into $L^\ii(\Om)$ or $L^1(\Om)$.

\begin{pro}\label{p42}
Let $(\Om,\cal A,\mu)$ be a $\sigma$-finite infinite measure space.
If $X=X(\Om)$ is the symmetric space generated by a symmetric space $(X(0,\ii), \|\cdot\|_X)$ on $((0,\ii),\nu)$, then
\begin{enumerate}[(i)]
\item
$X\subseteq L^\ii(\Om)$ if and only if $\bt(X) >0$;
\item
$X\subseteq L^1(\Om)$ if and only if $\al(X) >0$;
\item
$X\subseteq L^1(\Om)$ if and only if $L^\ii(\Om)\subseteq X^\times$, where $X^\times$ is the K\"othe dual of $X$.
\end{enumerate}
\end{pro}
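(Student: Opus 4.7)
The plan is to establish (i), then (iii), then (ii), so that the harder direction of (ii) can be deduced from (iii) together with the Lorentz identity $\varphi_X(t)\varphi_{X^\times}(t)=t$ for the fundamental functions of $X$ and its K\"othe dual (see, e.g., \cite[Ch.2, Theorem 5.2]{bs}).

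For (i), I plan a two-sided fundamental-function estimate. In the forward direction, the embedding $X\su L^\ii(\Om)$ is automatically norm-continuous, so $\|f\|_\ii\leq c\|f\|_X$ for some $c>0$; testing on a characteristic function with measure $t$ gives $1\leq c\,\varphi_X(t)$, and sending $t\to 0^+$ yields $\beta(X)\geq 1/c>0$. For the converse, given $f\in X$ and $t>0$, the pointwise inequality $f^*(t)\chi_{(0,t]}\leq f^*$ combined with the symmetric property of $\|\cdot\|_X$ furnishes $f^*(t)\varphi_X(t)\leq\|f\|_X$, so $\|f\|_\ii=f^*(0^+)\leq\|f\|_X/\beta(X)<\ii$.

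Part (iii) I intend to treat by a short duality argument. If $X\su L^1(\Om)$, then for every $g\in L^\ii(\Om)$ and $f\in X$ one has $\int|fg|\,d\mu\leq\|g\|_\ii\|f\|_1<\ii$, so $g\in X^\times(\Om)$; conversely, if $L^\ii(\Om)\su X^\times(\Om)$, then $\mb 1\in X^\times(\Om)$, whence $\|f\|_1=\int|f\cdot\mb 1|\,d\mu<\ii$ for every $f\in X$.

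For (ii) the forward direction is the same continuity-on-characteristic-function trick applied to $L^1$, giving $t\leq c\,\varphi_X(t)$, hence $\alpha(X)\geq 1/c>0$. For the converse, assuming $\alpha(X)>0$, I will show $\mb 1\in X^\times$ and then invoke (iii). The Lorentz identity yields $\varphi_{X^\times}(t)=t/\varphi_X(t)$, and the monotonicity of $\varphi_X(t)/t$ (from quasi-concavity of $\varphi_X$) gives $\varphi_{X^\times}(t)\leq 1/\alpha(X)$ for every $t>0$. Since $\chi_{(0,t]}\uparrow\mb 1$ pointwise as $t\to\ii$ and $X^\times=(X^\times)^{\times\times}$ possesses Fatou property, this bound upgrades to $\|\mb 1\|_{X^\times}=\sup_t\varphi_{X^\times}(t)\leq 1/\alpha(X)<\ii$, so $\mb 1\in X^\times$; every $g\in L^\ii$ satisfies $|g|\leq\|g\|_\ii\mb 1$, hence $L^\ii\su X^\times$ and (iii) delivers $X\su L^1$. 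The main technical point I would watch is that the fundamental-function manipulations must be read in the generating space $(X(0,\ii),\|\cdot\|_X)$, via the isometric correspondence underlying the definition of $X(\Om)$, so that the arguments remain valid also when $\Om$ is totally atomic.
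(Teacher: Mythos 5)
Your proposal is correct, and it diverges from the paper's proof in both converse directions. For the converse of (i) the paper argues by contraposition: from an unbounded $f\in X$ it extracts a decreasing sequence of sets $B_n\su\{f\ge n\}$ with $\nu(B_n)\to 0$ and gets $n\,\bt(X)\le\|n\chi_{B_n}\|_X\le\|f\|_X$, forcing $\bt(X)=0$; your direct estimate $f^*(t)\,\varphi_X(t)\le\|f\|_X$ (from $(f^*(t)\chi_{(0,t]})^*\le f^*$) is cleaner and even yields the explicit bound $\|f\|_\ii\le\|f\|_X/\bt(X)$. For the converse of (ii) the paper applies the inequality $\int_0^t f^*\,d\nu\le\frac{t}{\varphi_X(t)}\|f\|_X$ (KPS, Ch.~II, \S4, (4.6)) directly and lets $t\to\ii$, whereas you route the argument through (iii) via the identity $\varphi_X(t)\varphi_{X^\times}(t)=t$ and the Fatou property of $X^\times=(X^\times)^{\times\times}$ to conclude $\mb 1\in X^\times$; this is valid (and the uniform bound $\varphi_{X^\times}(t)\le 1/\al(X)$ together with $\chi_{(0,t]}\uparrow\mb 1$ does upgrade correctly), but note that the standard proof of the identity $\varphi_X\varphi_{X^\times}=t$ rests on the very same KPS inequality, so the two arguments are ultimately equivalent in substance, with the paper's being more self-contained and yours exposing the duality structure that the paper only records afterwards in Corollary~\ref{c42}. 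Part (iii) and both forward directions coincide with the paper's (continuity of the embedding tested on $\chi_{(0,t]}$), and your closing remark about reading everything in the generating space $X(0,\ii)$ is exactly the reduction with which the paper opens its proof.
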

\begin{proof}
It is clear that $X(\Om) \subseteq Y(\Om)$ if and only if $X(0,\ii) \subseteq  Y(0,\ii) $, where $Y(0,\ii)$ is a symmetric  space on $((0, \infty), \nu)$ that generated $Y(\Om)$. Consequently, it is sufficient to prove the proposition in the case
$(\Om,\cal A,\mu) = ((0,\ii),\nu)$.

(i) If $X=X(0,\ii) \subseteq L^\ii(0,\ii)$, then there is $c_0 >0$ such that $\| f \|_\ii \leq c_0 \| f \|_X$
for all $f\in X$ \ \cite[Ch.6, \S 6.1, Proposition 6.1.1]{rgmp}. Therefore
$$
\varphi_X(t) = \|\chi_{(0,t]} \|_{X} \ge\frac {\| \chi_{(0,t]} \|_\ii}{c_0} =\frac1{c_0}
$$
and
$$
\bt (X) = \lim\limits_{t \to 0^+}\varphi_X(t) \ge\frac1{c_0}>0.
$$
If $X\nsubseteq L^\ii(0,\ii) $, then there is a positive unbounded function
$f$ in $X(0,\ii)\sm L^\ii(0,\ii)$; in particular, $\nu(A_n) >0$, where $A_n=\{f\ge n\}$, $n\in\mathbb N$.
Choose a sequence $B_n \subseteq A_n$ such that $B_n\supseteq B_{n + 1}$, $0<\nu(B_n)<\ii$, and
$\lim\limits_{n\to\ii} \nu(B_n)=0$. Then we have
$$
n\cdot\bt(X) \leq n \| \chi_{B_n}\|_X=\| n\cdot\chi_{B_n}\|_X\leq\| f \|_X<\ii \ \ \ \forall\ \ n \in \mathbb N,
$$
hence $\bt(X)=0$.

(ii) If  $X\subseteq L^1(0,\ii)$, then there is $c_1> 0$ such that $\| f \|_1\leq c_1\| f \|_X$ for all $f\in X$
\cite[Ch.6, \S 6.1, Proposition 6.1.1]{rgmp}.
Consequently,
$$
\frac{\varphi_X(t)}t = \frac{\| \chi_{(0,t]} \|_X}t\ge\frac{\| \chi_{(0,t]}\|_1}{c_1\cdot t}=\frac1{c_1}
$$
and
$$
\al (X) = \lim\limits_{t\to\ii}\frac{\varphi_X(t)}t \geq\frac1{c_1} > 0.
$$
Assume now that $\al(X)>0$. By \cite[Ch.II, \S 4, Inequality (4.6)]{kps}, we have
$$
\|f^*\cdot \chi_{(0,t]} \|_1 \leq\frac1{\varphi_X(t)}\cdot \|f^*\cdot\chi_{(0,t]}\|_X
$$
for all $f\in X$. Since
$\frac{\varphi_X(t)}t\geq\al(X) > 0$, $t>0$, it follows that $\frac t{\varphi_X(t)} \leq\frac1{\al(X)}$, implying that
$$
\|f^* \cdot \chi_{(0,t]}\|_1\leq\frac{\|f^* \cdot \chi_{(0,t]}\|_X}{\al(X)} \leqslant \frac{\|f^*\|_X}{\al (X)},
$$
so $\|f\|_1 \leq\frac{ \|f\|_X}{\al(X)}<\ii$ for all $f\in X$, that is,  $X\subseteq L^1(0,\ii)$.

(iii) If $X\subseteq  L^1(0,\ii)$, then
$\int\limits_0^\ii|\mb 1 \cdot f| d \nu=\|f\|_1<\infty$ for all $f\in X$. Thus
$\mb 1 \in X^\times$ and $L^\ii(0,\ii) \subseteq X^\times$.

Conversely, if $L^\ii(0,\ii) \subseteq X^\times$, then $\mb 1 \in  X^\times$, that is, the linear functional
$$
\varphi (f)= \int\limits_0^\ii\mb 1\cdot f \ d \nu, \ \ f \in X,
$$
is bounded on $X$.Then it follows that
$$
\|f\|_1 = \int\limits_0^\ii\mb 1 \cdot |f| d\mu = \varphi (|f|) \leq\|\varphi\| \cdot \|f\|_X<\ii
$$
for every function $f\in X$, hence $X\subseteq  L^1(0,\ii)$.
\end{proof}

\begin{cor}\label{c42}
Let $(\Om,\cal A,\mu)$ and $X(0,\ii)$ be as in Proposition \ref{p42}. Then following are equivalent
\begin{enumerate}[(i)]
\item
$X(\Om)\nsubseteq L^1(\Om)$;
\item
$L^\ii(\Om) \nsubseteq X^\times(\Om)$;
\item
$\mb 1 \notin X^\times(\Om)$.
\end{enumerate}
\end{cor}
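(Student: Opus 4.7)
The plan is to deduce Corollary~\ref{c42} directly from Proposition~\ref{p42}(iii) together with an elementary observation about $\mb 1$ being the ``largest'' element of $L^\ii(\Om)$ up to a scalar.

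First, the equivalence (i)~$\Longleftrightarrow$~(ii) is nothing but the contrapositive of Proposition~\ref{p42}(iii): the proposition asserts $X(\Om)\su L^1(\Om)$ if and only if $L^\ii(\Om)\su X^\times(\Om)$, so negating both sides yields exactly (i)~$\Longleftrightarrow$~(ii). No further work is required here.

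Next, I would establish (ii)~$\Longleftrightarrow$~(iii). The implication (iii)~$\Longrightarrow$~(ii) in its contrapositive form is immediate: since $\mb 1 \in L^\ii(\Om)$ with $\|\mb 1\|_\ii = 1$, the inclusion $L^\ii(\Om) \su X^\times(\Om)$ forces $\mb 1 \in X^\times(\Om)$. For the other direction, assume $\mb 1 \in X^\times(\Om)$ and pick an arbitrary $f \in L^\ii(\Om)$. Then $|f|(\om) \leq \|f\|_\ii \cdot \mb 1(\om)$ for $\mu$-a.e.\ $\om\in\Om$, and because $X^\times(\Om)$ is a symmetric space (hence an order ideal in $L^0_\mu$ in the sense recalled in the Preliminaries), this pointwise domination gives $f \in X^\times(\Om)$. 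Thus $L^\ii(\Om) \su X^\times(\Om)$, i.e., $\neg$(ii) implies $\neg$(iii).

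There is no real obstacle here: the only subtlety is to invoke the correct property of $X^\times(\Om)$ as a fully symmetric (hence ideal) space to transfer the inclusion $|f|\leq\|f\|_\ii\mb 1$ into membership in $X^\times(\Om)$. Everything else is a direct application of Proposition~\ref{p42}(iii) and the trivial fact that $\mb 1 \in L^\ii(\Om)$.
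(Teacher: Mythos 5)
Your proof is correct and follows essentially the same route as the paper: the corollary is left without an explicit proof there, but the paper's argument for Proposition~\ref{p42}(iii) already passes through the intermediate condition $\mb 1\in X^\times$ in both directions, which is exactly your chain (i)~$\Leftrightarrow$~(ii)~$\Leftrightarrow$~(iii). Your appeal to the ideal property of the symmetric space $X^\times(\Om)$ to upgrade $\mb 1\in X^\times(\Om)$ to $L^\ii(\Om)\su X^\times(\Om)$ is the same (implicit) step the authors use.
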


Let $(\Om,\cal A,\mu)$ be a $\sigma$-finite infinite measure space. Let $f\in\cal R_\mu$ and $T\in DS(\Om,\cal A,\mu)$.
By Theorem \ref{t31}, there exists $\wh f\in\cal R_\mu$ such that the sequence $\{M_n(T)(f)\}$ converges a.u. to $\wh f$.
Define the mapping $P:\cal R_\mu\to\cal R_\mu $ by setting
$$
P(f)=\wh f=(a.u.)-\lim_{n\to\ii} M_n(T)(f), \ \ f \in\cal R_\mu.
$$
It is clear that $P$ is linear. Since  $(L^1,\|\cdot\|_1)$ possesses Fatou property and $\|M_n(T)(f)\|_1 \leq \|f\|_1$ for all 
$f \in L^1$, it follows that $\| P(f)\|_1 \leq \|f\|_1$ for every $f\in L^1$ \cite[Ch.IV, \S 3, Lemma 5]{ka}, that is, 
$\| P\|_{L^1\to L^1} \leq 1$.

Similarly, if $f\in L^1\cap L^\ii$, then $\|M_n(T)(f)\|_\infty \leqslant \|f\|_\infty$.
Therefore, a.u. convergence $M_n(T)(f)\to P(f)$ implies that $\| P(f)\|_\ii \leq |f\|_\ii$.

According to \cite[Theorem 3.1]{ccl}, there exists a unique operator $\wh P \in DS$  such that $\widehat{P}(f)=P(f)$   for all $f \in  \mathcal R_\mu$; in particular, $\|P\|_{\cal R_\mu \to \cal R_\mu} \leq 1$.

Additionally, by the classical mean ergodic theorem in the space $ L^2$, we have
$\|M_n(T)(f)-P(f)\|_2\to 0$ as $n\to\ii$ for any $f\in L^2$.

The next theorem is a version of the mean ergodic theorem for the fully symmetric space
$(\cal R_\mu,\|\cdot\|_{L^1+L^\ii})$.

\begin{teo}\label{t44}
If $T\in DS$, then
\begin{equation}\label{e71}
\|M_n(T)(f)-P(f)\|_{L^1+L^\ii}\to 0 \ \ \text{for all} \ \ f\in\cal R_\mu.
\end{equation}
\end{teo}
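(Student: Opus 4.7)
The strategy is the standard density plus uniform boundedness argument, with $L^1\cap L^\ii$ as the dense subspace. Recall from the preliminaries that $\cal R_\mu$ is the closure of $L^1\cap L^\ii$ in $(L^1+L^\ii,\|\cdot\|_{L^1+L^\ii})$, and that every $S\in DS$ (in particular every $M_n(T)$ and the extension $\wh P\in DS$ of $P$) is a contraction on $L^1+L^\ii$. Therefore the family $\{M_n(T)-P\}_n$ is uniformly bounded by $2$ in the norm $\|\cdot\|_{L^1+L^\ii}$ on $\cal R_\mu$, and it suffices to verify (\ref{e71}) on the dense subset $L^1\cap L^\ii$: for general $f\in\cal R_\mu$ and $\ep>0$ one picks $g\in L^1\cap L^\ii$ with $\|f-g\|_{L^1+L^\ii}<\ep$ and splits
$$
\|M_n(T)(f)-P(f)\|_{L^1+L^\ii}\leq \|M_n(T)(f-g)\|_{L^1+L^\ii}+\|M_n(T)(g)-P(g)\|_{L^1+L^\ii}+\|P(g-f)\|_{L^1+L^\ii},
$$
so the outer two terms are at most $\ep$ and the middle one tends to $0$.

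The core step is the case $f\in L^1\cap L^\ii$. For such $f$, Theorem \ref{t31} gives a.u. convergence $M_n(T)(f)\to P(f)$, while $\|M_n(T)(f)-P(f)\|_\ii\leq 2\|f\|_\ii$ for every $n$. I would exploit the formula $\|h\|_{L^1+L^\ii}=\int_0^1 h^*(t)\,dt$ to convert a.u. convergence into convergence in $\|\cdot\|_{L^1+L^\ii}$. Fix $0<\dt<1$ and choose $E\su\Om$ with $\mu(\Om\sm E)\leq\dt$ and $\al_n:=\|(M_n(T)(f)-P(f))\chi_E\|_\ii\to 0$. Setting $g_n=M_n(T)(f)-P(f)$, one has $\mu\{|g_n|>\al_n\}\leq\dt$ and $\|g_n\|_\ii\leq 2\|f\|_\ii$, hence $g_n^*(t)\leq 2\|f\|_\ii$ for $t\in(0,\dt]$ and $g_n^*(t)\leq\al_n$ for $t\in(\dt,1]$. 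Consequently
$$
\|g_n\|_{L^1+L^\ii}=\int_0^1 g_n^*(t)\,dt\leq 2\dt\,\|f\|_\ii+(1-\dt)\al_n,
$$
which can be made arbitrarily small by first choosing $\dt$ small and then $n$ large.

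Combining the two parts yields (\ref{e71}) for every $f\in\cal R_\mu$. I do not expect any serious obstacle; the only point that requires a moment's care is the conversion of a.u.\ convergence plus an $L^\ii$-bound on $L^1\cap L^\ii$ inputs into the tail estimate on $g_n^*$, which is exactly where the specific form $\|h\|_{L^1+L^\ii}=\int_0^1 h^*(t)\,dt$ is used.
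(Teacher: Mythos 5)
Your proposal is correct, and its overall skeleton (uniform boundedness of $M_n(T)$ and $P$ on $(\cal R_\mu,\|\cdot\|_{L^1+L^\ii})$ plus density of $L^1\cap L^\ii$) is exactly the paper's. Where you genuinely diverge is the core step on the dense subset: the paper simply invokes the classical mean ergodic theorem in $L^2$ and uses that $\|h\|_{L^1+L^\ii}=\int_0^1 h^*(t)\,dt\leq\|h\|_2$ for $h\in L^2$, so $L^2$-norm convergence of $M_n(T)(f)$ to $P(f)$ immediately gives $L^1+L^\ii$-norm convergence; you instead derive the same conclusion from the a.u.\ convergence of Theorem \ref{t31} together with the uniform bound $\|M_n(T)(f)-P(f)\|_\ii\leq 2\|f\|_\ii$ (the latter is available, since the paper records $\|P(f)\|_\ii\leq\|f\|_\ii$ for $f\in L^1\cap L^\ii$ just before the theorem), splitting $\int_0^1 g_n^*$ at $t=\dt$. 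Your rearrangement estimate is sound: on the exceptional set of measure $\leq\dt$ you use the $L^\ii$ bound, and off it $g_n^*(t)\leq\al_n\to 0$. The paper's route is shorter and leans on an external classical result; yours is more self-contained within the paper's own machinery (it reuses the a.u.\ theorem rather than the Hilbert-space mean ergodic theorem) at the cost of the extra, but correct, bookkeeping with $g_n^*$. Both are complete proofs.
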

\begin{proof}
We have
$$
\sup\limits_n\|M_n(T)\|_{L^1+L^\ii\to L^1+L^\ii}\leq 1 \text{ \ \ and \ \ } \| P\|_{\cal R_\mu\to\cal R_\mu} \leq 1.
$$
Since $\|M_n(T)(f)-P(f)\|_2 \to 0$, it follows that $\|M_n(T)(f)-P(f)\|_{L^1+L^\ii}\to 0$ for any $f \in L^1\cap L^\ii\su L^2$. Using density of $L^1 \cap L^\ii$ in the Banach space $(\cal R_\mu, \|\cdot\|_{L^1+L^\ii})$ and the principle  of uniform boundedness, we arrive at (\ref{e71}).
\end{proof}

Now we can establish the following important property of the operator $P$.
\begin{pro}\label{p43}
$P^2=P$ and
$$
TP(f)=P(f)=PT(f)\text{\ \ for all\ \ } f\in\cal R_\mu.
$$
\end{pro}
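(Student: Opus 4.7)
The plan is to combine the algebraic telescoping identity
\begin{equation*}
T M_n(T)(f) \;=\; M_n(T)(f) + \tfrac{1}{n}\bigl(T^n(f)-f\bigr) \;=\; M_n(T)(T(f))
\end{equation*}
with the $L^1+L^\ii$-norm convergence $M_n(T)(f)\to P(f)$ on $\cal R_\mu$ furnished by Theorem \ref{t44}, plus the fact that every $T\in DS$ acts as a contraction on the fully symmetric space $(\cal R_\mu,\|\cdot\|_{L^1+L^\ii})$ (see Section 2), and in particular is continuous in that norm.

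First I would prove $TP(f)=P(f)$. Since $T$ is a contraction on $L^1+L^\ii$, one has $\|T^n(f)-f\|_{L^1+L^\ii}\leq 2\|f\|_{L^1+L^\ii}$, so $\tfrac{1}{n}(T^n(f)-f)\to 0$ in $\|\cdot\|_{L^1+L^\ii}$. Passing to the limit in the left half of the telescoping identity and using continuity of $T$ together with $M_n(T)(f)\to P(f)$ yields $TP(f)=P(f)$.

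Next I would prove $PT(f)=P(f)$, using the right half of the same identity. Note that $T(f)\in\cal R_\mu$ whenever $f\in\cal R_\mu$, since $\cal R_\mu$ is fully symmetric and $T\in DS$; hence $P(T(f))$ is defined, and by Theorem \ref{t44} the sequence $M_n(T)(T(f))$ converges in $\|\cdot\|_{L^1+L^\ii}$ to $P(T(f))$. On the other hand, the identity shows it also converges to $P(f)$, so $P(T(f))=P(f)$.

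Finally, for $P^2=P$: iterating $TP(f)=P(f)$ gives $T^kP(f)=P(f)$ for every $k\ge 0$, so $M_n(T)(P(f))=P(f)$ for all $n$, and therefore $P(P(f))=\lim_n M_n(T)(P(f))=P(f)$. I do not foresee a serious obstacle; everything reduces to passing to limits in the $\|\cdot\|_{L^1+L^\ii}$-norm, which is legitimate by Theorem \ref{t44} and the continuity of $T$. The only minor point to verify along the way is the invariance $T(\cal R_\mu)\su\cal R_\mu$, which is immediate from $T\in DS$ together with $\cal R_\mu$ being fully symmetric.
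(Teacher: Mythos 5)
Your proposal is correct and follows essentially the same route as the paper: both arguments rest on the telescoping identity $TM_n(T)(f)=M_n(T)(T(f))=M_n(T)(f)+\tfrac1n(T^n(f)-f)$ together with passage to the limit via Theorem \ref{t44}, and the step $P^2=P$ via $T^kP=P$ is identical. The only cosmetic difference is that the paper derives $PT(f)=P(f)$ by taking a.u.\ limits after rewriting $\tfrac1n(I-T^n)$ in terms of $M_n(T)$ and $M_{n+1}(T)$, whereas you stay entirely in the $\|\cdot\|_{L^1+L^\ii}$-norm by estimating $\|\tfrac1n(T^n(f)-f)\|\leq\tfrac2n\|f\|$, which is equally valid.
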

\begin{proof}
Since
$$
(I-T)M_n(T)=\frac{I-T^n}n=\frac In+M_n(T)-\frac{n+1}nM_{n+1}(T),
$$
it follows that
$$
P(f)-PT(f)=a.u.-\lim_{n\to\ii}(I-T)M_n(T)(f)=0,
$$
hence $PT(f) = P(f)$, for all $f\in\cal R_\mu$.

Denote $\|\cdot\|_{L^1+L^\ii}$ by $\|\cdot\|$. Then, By Theorem \ref{t44}, $\|M_n(T)(f)-P(f)\|\to 0$
for each $f\in\cal R_\mu$. Consequently,
\begin{equation*}
\begin{split}
TP(f) &= T\left( \|\cdot\|-\lim_{n\to\ii} M_n(T)(f)\right)\\
&=\|\cdot\|-\lim_{n\to\ii} \frac1n\sum_{k=1}^n T^k(f)
=\|\cdot\|-\lim_{n\to\ii} \left(\frac1n\sum_{k=0}^n T^k(f)-\frac fn\right)\\
&=\|\cdot\|-\lim_{n\to\ii} \left(\frac{n+1}nM_{n+1}(T)(f)-\frac fn\right)=P(f).
\end{split}
\end{equation*}
Therefore $TP(f) = P(f) = PT(f)$, $f\in\cal R_\mu$, hence $M_n(T)P = P$, $n=1,2,\dots$, implying that $P^2=P$.
\end{proof}

We will also need the following property of symmetric spaces \cite[Proposition 2.2]{dds}.
\begin{pro}\label{p44}  Let $(\Om,\cal A,\mu)$ be a $\sigma$-finite infinite measure space.
Let $(X,\|\cdot\|_X)$ be a separable symmetric space on $((0,\ii),\nu)$ such that
$X^\times\subseteq\cal R_\nu$. If $\{f_n\}\su X(\Om)$ and $g\in X(\Om)$ are such that $f_n \prec\prec g$ for all $n$,
then $f_n\to 0$ in measure implies that $\|f_n\|_{X(\Om)}\to 0$ as $n\to\ii$.
\end{pro}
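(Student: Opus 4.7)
The plan is to combine the order-continuity of the norm of $X$ (coming from separability) with the vanishing of the fundamental indicator $\al(X)$ (which is forced by the hypothesis $X^\times\su\cal R_\nu$), splitting the interval $(0,\ii)$ into three regions. First I would reduce to showing $\|f_n^*\|_X\to 0$, since $\|f_n\|_{X(\Om)}=\|f_n^*\|_X$. Measure convergence $f_n\to 0$ yields $f_n^*(t)\to 0$ for every $t>0$. Separability of $X$ forces order-continuity of its norm, so by Example 2 of Section 3, $X\su\cal R_\nu$; in particular $g^*(t)\to 0$ as $t\to\ii$. Moreover, by Corollary \ref{c42}, the hypothesis $X^\times\su\cal R_\nu$ is equivalent to $X\nsubseteq L^1$, which by Proposition \ref{p42}(ii) is equivalent to $\al(X)=\lim_{t\to\ii}\varphi_X(t)/t=0$.

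Given $\ep>0$, order continuity allows one to choose $0<\dt<M<\ii$ with $\|g^*\chi_{(0,\dt)}\|_X<\ep$ and $\|g^*\chi_{(M,\ii)}\|_X<\ep$, and then to split $f_n^*=f_n^*\chi_{(0,\dt)}+f_n^*\chi_{[\dt,M]}+f_n^*\chi_{(M,\ii)}$. For the head, truncation preserves submajorization: $f_n^*\chi_{(0,\dt)}\prec\prec g^*\chi_{(0,\dt)}$, so full symmetry of $X$ gives $\|f_n^*\chi_{(0,\dt)}\|_X\le\|g^*\chi_{(0,\dt)}\|_X<\ep$. For the middle, $f_n^*\prec\prec g^*$ and monotonicity of $f_n^*$ yield the uniform pointwise bound $f_n^*(t)\le f_n^*(\dt)\le\frac{1}{\dt}\int_0^\dt g^*(u)\,du$ for $t\ge\dt$, and thus $\{f_n^*\chi_{[\dt,M]}\}$ is dominated by a fixed element of $L^1\cap L^\ii\su X$. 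Combined with pointwise convergence $f_n^*\to 0$, dominated convergence in the order-continuous space $X$ gives $\|f_n^*\chi_{[\dt,M]}\|_X\to 0$.

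The technical heart of the argument is the tail $\|f_n^*\chi_{(M,\ii)}\|_X$, and this is where the hypothesis $X^\times\su\cal R_\nu$ is decisive. I would attack it by K\"othe duality. Assuming first $g\in L^1$ (the general case reducing via the decomposition of Proposition \ref{p22}), for any non-increasing $h\in X^\times$ with $\|h\|_{X^\times}\le 1$ I would integrate by parts twice, using $F_n(t):=\int_0^t f_n^*\le G(t):=\int_0^t g^*$, to obtain
$$\int_M^\ii f_n^*(t)h(t)\,dt\le G(M)h(M)+\int_M^\ii g^*(t)h(t)\,dt.$$
The estimate $Mh(M)\le\int_0^M h\le\|h\|_{X^\times}\varphi_X(M)$ gives $h(M)\le\varphi_X(M)/M$, hence
$$\|f_n^*\chi_{(M,\ii)}\|_X\le G(M)\,\frac{\varphi_X(M)}{M}+\|g^*\chi_{(M,\ii)}\|_X.$$
The second summand is less than $\ep$ by construction, and the first tends to $\|g\|_1\cdot\al(X)=0$ as $M\to\ii$, since $\al(X)=0$ by the hypothesis. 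Combining the three estimates gives $\limsup_n\|f_n^*\|_X\le 3\ep$ for every $\ep>0$, hence the conclusion.

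The main obstacle I expect is making the integration-by-parts bound rigorous in the general case $g\notin L^1$, where the boundary term at infinity in the first IBP no longer vanishes automatically; I would handle it by writing $g=g_1+g_2$ with $g_1\in L^1$ and $\|g_2\|_\ii$ arbitrarily small (Proposition \ref{p22}), applying the above argument to the $g_1$-part, and absorbing the $g_2$-contribution into an extra $\ep$-error using the uniform bound $f_n^*(t)\le g^{**}(t)$ together with the vanishing of $g^{**}$ at infinity. The necessity of the hypothesis $X^\times\su\cal R_\nu$ is underscored by the classical counterexample $f_n=\chi_{(n,n+1)}$, $g=\chi_{(0,1)}$ in $L^1(0,\ii)$, for which $f_n\prec\prec g$ and $f_n\to 0$ in measure but $\|f_n\|_1=1$ for all $n$.
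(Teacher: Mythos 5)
The paper does not actually prove this proposition; it is quoted verbatim from \cite[Proposition 2.2]{dds}, so there is no in-paper argument to compare against. Judged on its own, your three-region strategy is sensible and the head and middle estimates are fine, but the tail estimate --- which you yourself call the technical heart --- has a genuine gap. Your bound reduces to controlling $G(M)\varphi_X(M)/M$, and you justify its smallness only by writing its limit as $\|g\|_1\cdot\al(X)$, which presupposes $g\in L^1$; for general $g\in X\su L^1+L^\ii$ one has $G(M)\to\ii$ (indeed $G(M)$ can grow linearly), so the expression is a $\frac{\ii\cdot 0}{}$-type limit and nothing is proved. The repair you sketch does not work as described: writing $g^*=g_1+g_2$ with $g_1\in L^1$ and $\|g_2\|_\ii\le\ep$ does not induce a corresponding splitting of $f_n$ (that would require the Calder\'on--Mityagin theorem, and even then the $L^1$-piece of $f_n$ is not known to tend to $0$ in measure), and the pointwise bound $f_n^*\le g^{**}$ with $g^{**}(M)$ small only gives an $L^\ii$-bound on $f_n^*\chi_{(M,\ii)}$ --- a function bounded by a small constant on a set of infinite measure need not be small in $X$, since $\mb 1\notin X$. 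The needed fact is nonetheless true: for any $g\in X$ with $\|\cdot\|_X$ order continuous and $\al(X)=0$ one has $g^{**}(M)\varphi_X(M)\to 0$, because for $s<M$, by \cite[Ch.II, \S 4, (4.6)]{kps} applied to $g^*\chi_{(s,M)}$,
$$
\frac{\varphi_X(M)}{M}\,G(M)\;\le\;\frac{\varphi_X(M)}{M}\,G(s)\;+\;\|g^*\chi_{(s,\ii)}\|_X ,
$$
and one first makes the second term small by order continuity, then sends $M\to\ii$ using $\al(X)=0$. So the gap is repairable, but not by the route you propose.

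Two smaller points. First, passing from $\sup\{\int u|h|:\|h\|_{X^\times}\le1\}$ (legitimate for order-continuous $X$, where $X^*=X^\times$) to the supremum of $\int_M^\ii f_n^*h$ over non-increasing $h$ is not quite right: Hardy--Littlewood pairs $(f_n^*\chi_{(M,\ii)})^*(s)=f_n^*(s+M)$ with $h^*(s)$, which produces an extra term $\int_0^M f_n^*(s+M)h^*(s)\,ds\le f_n^*(M)\varphi_X(M)\le G(M)\varphi_X(M)/M$; this is of the same order as your boundary term and harmless once the point above is settled, but it should be accounted for. Second, your closing ``counterexample'' $f_n=\chi_{(n,n+1)}$ does not converge to $0$ in the measure topology $t_\nu$ used in this paper (here $\nu\{f_n>\tfrac12\}=1$ for all $n$); the example showing necessity of $X^\times\su\cal R_\nu$ for $X=L^1$ is rather $f_n=\tfrac1n\chi_{(0,n)}$, $g=\chi_{(0,1)}$.
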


\begin{teo}\label{t45}
Let $X=(X(\Om), \|\cdot\|_X)$ be a fully symmetric space on a $\sigma$-finite measure space
$(\Om,\cal A,\mu)$. If the norm $\|\cdot\|_X$ is order continuous and  $L^1\nsubseteq X$, then the averages $M_n(T)$ converge strongly in $X$ for each $T\in DS$.

If $(\Om,\cal A,\mu)$ is quasi-non-atomic, then
strong convergence of the averages $M_n(T)$ for every $T\in DS$ implies that the norm
$\|\cdot\|_X$ is order continuous and $L^1\nsubseteq X$.
\end{teo}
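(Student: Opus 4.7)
The plan is to obtain sufficiency by combining the almost uniform convergence from Theorem~\ref{t31} with the dominated-convergence principle of Proposition~\ref{p44}, and to obtain necessity in the quasi-non-atomic case from the contrapositives of Theorems~\ref{t42} and~\ref{t43}.

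For sufficiency, I would first observe that on an infinite measure space order continuity of $\|\cdot\|_X$ forces $\mb 1\notin X$: choose $\Omega_n\downarrow\emptyset$ with $\mu(\Omega_n)=\ii$, so $\chi_{\Omega_n}\downarrow 0$ but $\chi_{\Omega_n}^*=\mb 1$, whence $\|\chi_{\Omega_n}\|_X=\|\mb 1\|_X$ would otherwise not tend to $0$. Proposition~\ref{p21} then places $X\subseteq\cal R_\mu$, so Theorem~\ref{t31} supplies $\wh f\in X$ with $M_n(T)(f)\to\wh f$ almost uniformly, hence in measure. To upgrade this to $\|\cdot\|_X$-convergence, I would invoke Proposition~\ref{p44}, whose hypotheses are separability of $X$ (which follows from order continuity of the norm) together with $X^\times\subseteq\cal R_\nu$; the latter is exactly $\mb 1\notin X^\times$ by Proposition~\ref{p21} applied to the K\"othe dual, which by Corollary~\ref{c42} translates the stated relationship between $X$ and $L^1$ into precisely what Proposition~\ref{p44} demands. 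Since $M_n(T)\in DS$ and $P$ extends to a Dunford--Schwartz operator on $L^1+L^\ii$ (see the discussion preceding Theorem~\ref{t44}), both $M_n(T)(f)$ and $\wh f=P(f)$ are submajorized by $f$; using $(a+b)^*(t)\le a^*(t/2)+b^*(t/2)$ one sees that the differences $M_n(T)(f)-\wh f$ are uniformly submajorized by $4|f|\in X$, and Proposition~\ref{p44} then delivers $\|M_n(T)(f)-\wh f\|_X\to 0$.

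For necessity in the quasi-non-atomic case both conditions fall out of earlier results: the contrapositive of Theorem~\ref{t43} forces $X$ to be generated by a separable fully symmetric space, equivalently $\|\cdot\|_X$ to be order continuous, while the contrapositive of Theorem~\ref{t42} yields the required non-containment between $X$ and $L^1$. The main obstacle in the sufficiency direction is the translation step: identifying, via Corollary~\ref{c42} and Proposition~\ref{p21} for the K\"othe dual, the hypothesis relating $X$ to $L^1$ with the condition $X^\times\subseteq\cal R_\nu$ that Proposition~\ref{p44} requires. Once this translation is in place, the rest is assembly of already-established ingredients.
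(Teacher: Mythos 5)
Your proposal is correct and follows essentially the same route as the paper: establish $\mb 1\notin X$ (hence $X\su\cal R_\mu$) from order continuity, get a.u.\ and therefore measure convergence of $M_n(T)(f)$ to $\wh f=P(f)\in X$ from Theorem~\ref{t31}, translate $L^1\nsubseteq X$ into $X^\times\su\cal R_\nu$ via Corollary~\ref{c42} and Proposition~\ref{p21}, and upgrade to norm convergence with Proposition~\ref{p44}; the converse is read off from Theorems~\ref{t42} and~\ref{t43} exactly as you say. The only (harmless) difference is in verifying the submajorization hypothesis of Proposition~\ref{p44}: the paper sets $g=f-P(f)$ and uses $M_n(T)(g)=M_n(T)(f)-P(f)\prec\prec g$ via Proposition~\ref{p43}, whereas you dominate the differences $M_n(T)(f)-\wh f$ directly by $4|f|$, which also works.
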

\begin{proof}
Since the symmetric space $(X(0,\ii),\| \cdot \|_{X(0,\ii)})$ is separable, it follows that $X(0,\ii)\subseteq\cal R_\nu$,
hence $X=X(\Om)\subseteq\cal R_\mu$. As $L^1\nsubseteq X$, we have $L^1(0,\ii) \nsubseteq X(0,\ii)$.
Therefore, by  Corollary \ref{c42}, $\mb1 \notin X^\times(0,\ii)$, so $X^\times(0,\ii) \subseteq\cal R_\nu$ by Proposition \ref{p21}.

Since $X$ is a fully symmetric space and $\wh P \in DS$,  it follows that  $g=f - P(f) \in X\su\cal R_\mu$ for any $f\in X$.  
By Proposition \ref{p43}, that $P(g)=0$. Therefore, in view of Theorem \ref{t34}, $M_n(T)(g) \to P(g)=0$ in measure. 
Since $M_n(T)(g) \prec\prec g \in X$ for every $n$, Proposition \ref{p44} entails that $\|M_n(T)(g)\|_X\to 0$.
Next, by Proposition \ref{p43},
$$
M_n(T)(g)=M_n(T)(f) - M_n(T)(P(f))=M_n(T)(f)-P(f),
$$
implying that $\|M_n(T)(f)-P (f)\|_X\to 0$.

If $(\Om,\cal A,\mu)$ is quasi-non-atomic and the averages $M_n(T)$ converge strongly for every $T\in DS$, then 
Theorems \ref{t42} and \ref{t43} entail that the norm $\| \cdot \|_X$ is order continuous and $L^1\nsubseteq X$.
\end{proof}

Utilizing Theorem \ref{t45} and Proposition \ref{p42}, we can now state the following.
\begin{cor}\label{c43}
Let $(X, \|\cdot\|_X)$ be a fully symmetric space on a $\sigma$-finite measure space $(\Om,\cal A,\mu)$.
If the norm   $\|\cdot\|_X$ is order continuous and $\al(X) =0$, then the averages $M_n(T)$ converge strongly in $X$ for each $T\in DS$.

If $(\Om,\cal A,\mu)$ is quasi-non-atomic, then strong convergence of the averages averages $M_n(T)$ for every
$T\in DS$ implies that the norm $\|\cdot\|_X$ is order continuous and $\al (X) =0$.
\end{cor}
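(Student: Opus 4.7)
The plan is to derive the corollary by combining Theorem \ref{t45} with Proposition \ref{p42}(ii), which supplies the dictionary translating the Banach-inclusion hypothesis of Theorem \ref{t45} into the numerical invariant $\al(X)$. Concretely, Proposition \ref{p42}(ii) asserts $X\subseteq L^1(\Om)\iff\al(X)>0$, so by contraposition $\al(X)=0$ is equivalent to the failure of the $L^1$-inclusion that enters Theorem \ref{t45}. This equivalence is available on the present footing because both sides depend only on the fundamental function $\varphi_X$ of the generating space $X(0,\ii)$, and it is preserved under the passage from $X(0,\ii)$ to $X(\Om)$ noted before Proposition \ref{p42}.

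For the forward direction, I would assume $\|\cdot\|_X$ is order continuous and $\al(X)=0$; applying the dictionary, the full hypothesis of Theorem \ref{t45} is met, and its first half yields that $M_n(T)$ converges strongly in $X$ for every $T\in DS$. For the converse, under the additional quasi-non-atomicity assumption, strong convergence for every $T\in DS$ forces (by the second half of Theorem \ref{t45}) both the order continuity of $\|\cdot\|_X$ and the failure of the $L^1$-inclusion; then Proposition \ref{p42}(ii) translates the latter back to $\al(X)=0$.

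There is no substantive obstacle: all the analytic content (the existence of the limit operator $P$, its identification via the mean ergodic theorem in $L^2$, the domination argument through $\prec\prec$, Proposition \ref{p44}, and the counterexamples in the quasi-non-atomic case) has already been absorbed into Theorem \ref{t45}. The corollary is a purely formal repackaging whose only delicate point is to keep the direction of the inclusion straight when invoking Proposition \ref{p42}(ii) in each half.
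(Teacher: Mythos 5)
Your proposal is correct and coincides with the paper's own (implicit) argument: the corollary is obtained by feeding the equivalence of Proposition \ref{p42}(ii), namely $X\subseteq L^1(\Om)\iff\al(X)>0$, into both halves of Theorem \ref{t45}. The ``delicate point'' about inclusion direction that you flag is real, but it is a typo in the paper rather than a gap in your argument: the hypothesis of Theorem \ref{t45} should read $X\nsubseteq L^1$ (as its proof, which invokes Corollary \ref{c42} to obtain $\mb 1\notin X^\times$, and the contrapositive of Theorem \ref{t42} both require), and with that reading Proposition \ref{p42}(ii) translates it exactly into $\al(X)=0$.
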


\section{Ergodic theorems in Orlicz, Lorentz and Marcinkiewicz  spaces}

In this section we give applications of Theorems \ref{t31},  \ref{t34}, and \ref{t45}, to Orlicz, Lorentz
and Marcinkiewicz spaces.

1. Let $\Phi$ be an Orlicz function, and let $L^\Phi=(L^\Phi(\Om),\| \cdot \|_{\Phi})$ be corresponding Orlicz  space.
As noted in Section 3, if $\Phi(u)>0$ for all $u\ne 0$, then $\mb1\notin L^\Phi$; if $\Phi(u)=0$ for all $0\leq u< u_0$,
then $\mb1\in L^\Phi$.

Therefore, Theorems \ref{t31}  and \ref{t34} imply the following

\begin{teo}\label{t51}
Let $(\Om,\cal A,\mu)$ be an arbitrary measure space, and let $\Phi$ be an Orlicz function. If $\Phi(u)>0$ for all $u>0$,
$T\in DS$, and $f\in L^\Phi$, then there exists $\wh f\in L^\Phi$ such that  the averages (\ref{e11}) converge a.u. to
$\wh f$.

If \ $(\Om,\cal A,\mu)$ is quasi-non-atomic and $\Phi(u)=0$ for all $0\leq u< u_0$,
then there exist $T\in DS$ and $f \in  L^\Phi$ such that the averages $M_n(T)(f)$ do not converge a.e., hence a.u.
\end{teo}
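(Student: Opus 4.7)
The plan is to reduce both claims directly to Theorems \ref{t31} and \ref{t34} via the observations about when $\mb 1 \in L^\Phi$ that are already recorded in the Orlicz discussion of Section 3.

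For the first statement, assume first that $\mu(\Om) = \ii$. Since $\Phi(u) > 0$ for every $u > 0$, we have $\Phi(1/a) > 0$ for every $a > 0$, hence
$$
\int_\Om \Phi\!\left(\frac{\mb 1}{a}\right) d\mu = \Phi(1/a)\,\mu(\Om) = \ii,
$$
so $\mb 1 \notin L^\Phi$. Because $L^\Phi$ is a fully symmetric space, Theorem \ref{t31} applies directly and delivers $\wh f \in L^\Phi$ with $M_n(T)(f) \to \wh f$ a.u. In the complementary finite-measure case Theorem \ref{t31} is not immediately applicable (since then $\mb 1 \in L^\Phi$), so I would argue separately: the inclusion $L^\Phi \su L^1(\Om) + L^\ii(\Om) = L^1(\Om)$ puts $f$ in $L^1$, the classical Dunford--Schwartz individual ergodic theorem supplies an a.e. limit $\wh f \in L^1$, Egorov's theorem upgrades this to a.u. convergence, and the submajorization $M_n(T)(f) \prec\prec f$ combined with the Fatou property of $L^\Phi$ forces $\wh f \in L^\Phi$.

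For the second statement, I would first observe that the hypothesis $\Phi \equiv 0$ on $[0,u_0)$ makes $\mb 1 \in L^\Phi$: choosing $a > 1/u_0$ gives $\Phi(1/a) = 0$, so $\int_\Om \Phi(\mb 1/a)\,d\mu = 0 \leq 1$ and thus $\|\mb 1\|_\Phi < \ii$. Consequently $\mb 1 \in L^\Phi \sm \cal R_\mu$, witnessing $L^\Phi \not\su \cal R_\mu$. The quasi-non-atomic hypothesis then lets me invoke the failure of the (iii) $\Rightarrow$ (i) direction of Theorem \ref{t34} (equivalently, the result from \cite{cl} quoted in its proof): applied with $f = \mb 1$, it produces a $T \in DS$ for which $\{M_n(T)(\mb 1)\}$ fails to converge a.e., and a fortiori fails to converge a.u.

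The overall argument is structural rather than technical; the only mildly delicate point is recognizing that Theorem \ref{t31} does not directly cover the finite-measure subcase of Part 1, where a short appeal to the classical theorem, Egorov, and Fatou is needed in its place.
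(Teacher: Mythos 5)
Your proposal is correct and follows essentially the same route as the paper, which derives Theorem \ref{t51} directly from Theorems \ref{t31} and \ref{t34} via the observations (recorded at the end of Section 3) that $\mb 1\notin L^\Phi$ when $\Phi(u)>0$ for all $u\neq 0$ and $\mu(\Om)=\ii$, while $\mb 1\in L^\Phi$ when $\Phi$ vanishes on $[0,u_0)$. Your explicit treatment of the finite-measure subcase of Part 1 (classical Dunford--Schwartz plus Egorov plus the Fatou property) is a point the paper leaves implicit, and your handling of it is sound.
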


It is said that an Orlicz function $\Phi$ satisfies {\it $(\Delta_2)$-condition at $0$ (at $\infty$)} if there exist $u_0\in (0,\ii)$ and $k >0 $  such that $\Phi(2u) < k\cdot \Phi(u)$ for all $0<u< u_0$ (respectively, $u >u_0$).
An Orlicz function $\Phi$ satisfies $(\Delta_2)$-condition at $0$ and at $\infty$ if and only if $(L^\Phi(0,\ii), \| \cdot\|_\Phi)$ has order continuous norm \cite[Ch.2, \S 2.1, Theorem 2.1.17]{es}.

By \cite[Ch.2, \S 2.2, Theorem 2.2.3]{es}, $L^\Phi(0,\ii) \subseteq L^1(0,\ii)$ if and only if
$$
\limsup\limits_{u\to 0}\frac{\Phi(u)}u>0\text{ \ \ and \ \ }\limsup\limits_{u\to \ii}\frac{\Phi(u)}u=0.
$$
Therefore, Theorem \ref{t45} yields the following.

\begin{teo}\label{t52}
Let $(\Om,\cal A,\mu)$ be $\sigma$-finite measure space, and let an Orlicz function $\Phi$ satisfies $(\Delta_2)$-condition at $0$ and at $\infty$. If $\lim\limits_{u\to 0}\frac{\Phi(u)}u =0$ or $\limsup\limits_{u \to \ii}\frac{\Phi(u)}u>0$, then the averages $M_n(T)$ converge strongly in $L^\Phi$ for all $T\in DS$.

If $(\Om,\cal A,\mu)$ is quasi-non-atomic measure space, then strong convergence of the averages $M_n(T)$ for every $T\in DS$ implies that the Orlicz function $\Phi$ satisfies $(\Delta_2)$-condition at $0$ and at $\ii$; in addition,
$\lim\limits_{u \to 0} \frac{\Phi(u)}u=0$ or $\limsup\limits_{u \to \ii}\frac{\Phi(u)}u>0$.
\end{teo}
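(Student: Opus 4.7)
The plan is to derive Theorem \ref{t52} as a direct corollary of Theorem \ref{t45}, translating each of its two hypotheses into Orlicz-theoretic conditions through two facts already recorded in this section: first, that $(L^\Phi(0,\ii),\|\cdot\|_\Phi)$ has order-continuous norm if and only if $\Phi$ satisfies the $(\Delta_2)$-condition both at $0$ and at $\infty$; second, the quoted analytic criterion from \cite[Ch.2, \S 2.2, Theorem 2.2.3]{es} characterizing the embedding $L^\Phi(0,\ii)\su L^1(0,\ii)$ by the conjunction $\limsup_{u\to 0}\Phi(u)/u>0$ together with $\limsup_{u\to\ii}\Phi(u)/u=0$.

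For the sufficiency direction, I start from the two assumed conditions on $\Phi$ and translate each into one hypothesis of Theorem \ref{t45}. The $(\Delta_2)$-condition at $0$ and at $\infty$ gives order continuity of $\|\cdot\|_\Phi$ by the first characterization above, and this property transfers from $(L^\Phi(0,\ii),\|\cdot\|_\Phi)$ to $(L^\Phi(\Om),\|\cdot\|_\Phi)$ on any $\sigma$-finite $(\Om,\cal A,\mu)$ as remarked just before Proposition \ref{p42}. The disjunction $\lim_{u\to 0}\Phi(u)/u=0$ or $\limsup_{u\to\ii}\Phi(u)/u>0$ is precisely the logical negation of the conjunction characterizing $L^\Phi(0,\ii)\su L^1(0,\ii)$, and so it is equivalent to the failure of that embedding, i.e., to the remaining $L^1$-hypothesis required by Theorem \ref{t45}. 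Applying Theorem \ref{t45} then yields strong convergence of $M_n(T)$ in $L^\Phi$ for every $T\in DS$.

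For the necessity direction the measure space is quasi-non-atomic, so the converse assertion of Theorem \ref{t45} is available; it returns exactly two conclusions, namely order continuity of $\|\cdot\|_\Phi$ and failure of the embedding of $L^\Phi$ into $L^1$. Running the same two characterizations backwards, the first conclusion delivers the $(\Delta_2)$-condition at $0$ and at $\infty$, and the second delivers the stated disjunction on $\Phi(u)/u$.

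The argument contains no substantive obstacle; it is essentially a bookkeeping exercise translating between fully symmetric space conditions and Orlicz-function conditions. The only small point requiring care is that monotonicity of $\Phi(u)/u$, a consequence of convexity together with $\Phi(0)=0$, collapses $\limsup_{u\to 0}\Phi(u)/u$ into the genuine limit $\lim_{u\to 0}\Phi(u)/u$ that appears in the statement of the disjunction.
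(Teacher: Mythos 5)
Your argument is exactly the paper's: Theorem \ref{t52} is obtained there as an immediate corollary of Theorem \ref{t45} via the same two cited equivalences --- $(\Delta_2)$ at $0$ and at $\infty$ being equivalent to order continuity of $\|\cdot\|_\Phi$, and the $\limsup$ criterion for the embedding $L^\Phi(0,\ii)\su L^1(0,\ii)$ --- and your closing remark that convexity of $\Phi$ with $\Phi(0)=0$ turns the $\limsup$ at $0$ into a genuine limit is the only detail the paper leaves implicit. One caveat, inherited from the paper rather than introduced by you: the condition you extract is the non-embedding $L^\Phi\nsubseteq L^1$, whereas Theorem \ref{t45} as literally stated hypothesizes $L^1\nsubseteq X$; since the proof of Theorem \ref{t45} actually uses $X\nsubseteq L^1$ (through Corollary \ref{c42}), your reading is the one under which the deduction is valid.
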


2. Let $\varphi$ be a concave function on $[0,\ii)$ with $\varphi(0) = 0$ and $\varphi(t)>0$ for all $t > 0$,
and let $\Lb_\varphi=(\Lb_\varphi(\Om),\| \cdot \|_{\Lb_\varphi})$ be the corresponding Lorentz space.
As noted in Section 3, $\varphi(\ii) = \ii$ if and only if $\mb1\notin\Lb_\varphi$.
Therefore, Theorems \ref{t31} and \ref{t34} imply the following.

\begin{teo}\label{t53}
If $(\Om,\cal A,\mu)$ is an arbitrary measure space and $\varphi(\ii) = \ii$, then for all $T\in DS$ and  $f\in\Lb_\varphi$ there exists $\wh f\in\Lb_\varphi$ such that the averages (\ref{e11}) converge a.u. to  $\wh f$.

If $(\Om,\cal A,\mu)$ quasi-non-atomic and $\varphi(\ii) < \ii$, then there exist $T\in DS$ and $f\in\Lb_\varphi$ such that  the averages $M_n(T)(f)$ do not converge a.e., hence a.u.
\end{teo}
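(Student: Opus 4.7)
The strategy is to invoke Theorems \ref{t31} and \ref{t34} directly, using only the observation that membership $\mathbf 1\in\Lb_\varphi$ is controlled precisely by the finiteness of $\varphi(\ii)$. Since $\mu(\Om)=\ii$, the non-increasing rearrangement of the constant function $\mathbf 1=\chi_\Om$ satisfies $\mathbf 1^*(t)\equiv 1$ on $(0,\ii)$, so
$$
\|\mathbf 1\|_{\Lb_\varphi}=\int_0^\ii \mathbf 1^*(t)\,d\varphi(t)=\varphi(\ii)-\varphi(0)=\varphi(\ii),
$$
and consequently $\mathbf 1\notin\Lb_\varphi$ if and only if $\varphi(\ii)=\ii$. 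This is exactly the dichotomy already noted in Section 3 for the Lorentz example, and it is the only analytic input needed.

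For the first assertion, assume $\varphi(\ii)=\ii$. Then $\mathbf 1\notin\Lb_\varphi$, and $\Lb_\varphi$ is a fully symmetric space on $(\Om,\cal A,\mu)$, so Theorem \ref{t31} applies directly: for every $T\in DS$ and every $f\in\Lb_\varphi$ the averages $M_n(T)(f)$ converge almost uniformly to some $\wh f\in\Lb_\varphi$. No further work is required here.

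For the second assertion, suppose $(\Om,\cal A,\mu)$ is quasi-non-atomic (with $\mu(\Om)=\ii$, in line with the convention of the preceding theorems) and $\varphi(\ii)<\ii$. Then $\mathbf 1\in\Lb_\varphi$, while $\mathbf 1\notin\cal R_\mu$ since $\mathbf 1^*(t)\equiv 1\not\to 0$ as $t\to\ii$. In particular $f:=\mathbf 1\in(L^1+L^\ii)\sm\cal R_\mu$. The implication (iii)$\Longrightarrow$(i) of Theorem \ref{t34}, which rests on \cite[Theorem 4.1]{cl}, then supplies a Dunford-Schwartz operator $T\in DS$ for which $M_n(T)(\mathbf 1)$ fails to converge a.e., and a fortiori fails to converge almost uniformly.

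There is essentially no obstacle: the whole argument is a direct transfer of Theorems \ref{t31} and \ref{t34} through the identity $\|\mathbf 1\|_{\Lb_\varphi}=\varphi(\ii)$, exactly paralleling the derivation of Theorem \ref{t51} from the Orlicz-space criterion $\mathbf 1\notin L^\Phi$. The only minor delicate point is making the hypothesis $\mu(\Om)=\ii$ explicit in the second half, without which $\cal R_\mu=L^1(\Om)$ and the distinction collapses.
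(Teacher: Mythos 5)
Your argument is correct and coincides with the paper's own (very brief) proof: the identity $\|\mathbf 1\|_{\Lb_\varphi}=\varphi(\ii)$ gives the dichotomy $\mb 1\notin\Lb_\varphi\Longleftrightarrow\varphi(\ii)=\ii$, after which the first assertion is Theorem \ref{t31} and the second is the negative direction of Theorem \ref{t34} (via \cite[Theorem 4.1]{cl}). Your explicit remark that $\mu(\Om)=\ii$ is being assumed is a welcome clarification of a convention the paper leaves implicit.
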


It is well-known that the space $(\Lb_\varphi(0, \infty),\| \cdot \|_{\Lb_\varphi})$ is separable if and only if $\varphi(+0) = 0$ and $\varphi(\ii) = \ii$ (see, for example, \cite[Ch.II, \S 5, Lemma 5.1]{kps}, \cite[Ch.9, \S 9.3, Theorem 9.3.1]{rgmp}). In addition, the fundamental function $\varphi_{\Lb_\varphi}(t) = \varphi(t)$. Therefore, Corollary \ref{c43} entail the following.

\begin{teo}\label{t54}
Let $(\Om,\cal A,\mu)$ be $\sigma$-finite, and let $\varphi$ be a concave function on $[0, \infty)$ with $\varphi(0) = 0$,
and $\varphi(t) > 0$ for all $t > 0$. If $\varphi(+0) = 0$, $\varphi(\ii)=\ii$, and $\al(\Lb_\varphi)=\lim\limits_{t\to\ii}\frac{\varphi(t)}t=0$, then the averages $M_n(T)$ converge strongly in $\Lambda_\varphi$ for each $T\in DS$.

If $(\Om,\cal A,\mu)$ is quasi-non-atomic, then strong convergence of averages $M_n(T)$ for every $T\in DS$ implies that
$\varphi$ satisfies conditions  $\varphi(+0) = 0$, $\varphi(\ii) = \ii$, and  $\lim\limits_{t\to\ii}\frac{\varphi(t)}t=0$.
\end{teo}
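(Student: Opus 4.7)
The proof reduces directly to Corollary \ref{c43} once the abstract hypotheses on $(X,\|\cdot\|_X)$ are translated into the concrete conditions on $\varphi$. The plan is first to record two standard facts about the Lorentz space $\Lb_\varphi$, and then to apply the corollary in both directions.

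First I would recall the two identifications. By the separability criterion cited in the paragraph preceding Theorem \ref{t54}, the space $\Lb_\varphi(0,\ii)$ is separable if and only if $\varphi(+0)=0$ and $\varphi(\ii)=\ii$; combined with the equivalence between order continuity of the norm of a symmetric function space on $((0,\ii),\nu)$ and its separability (quoted in Section 2), this gives that $\|\cdot\|_{\Lb_\varphi}$ is order continuous precisely when $\varphi(+0)=0$ and $\varphi(\ii)=\ii$. Secondly, the fundamental function of $\Lb_\varphi$ is $\varphi_{\Lb_\varphi}(t)=\varphi(t)$, so by the definition of $\al(X)$ one has $\al(\Lb_\varphi)=\lim_{t\to\ii}\varphi(t)/t$.

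For the sufficiency direction, the three hypotheses $\varphi(+0)=0$, $\varphi(\ii)=\ii$, and $\lim_{t\to\ii}\varphi(t)/t=0$ translate via the above into order continuity of $\|\cdot\|_{\Lb_\varphi}$ together with $\al(\Lb_\varphi)=0$. The first assertion of Corollary \ref{c43} then yields strong convergence of $M_n(T)$ in $\Lb_\varphi$ for every $T\in DS$. For the necessity direction, assuming $(\Om,\cal A,\mu)$ is quasi-non-atomic and that $M_n(T)$ converges strongly in $\Lb_\varphi$ for every $T\in DS$, the second assertion of Corollary \ref{c43} provides order continuity of $\|\cdot\|_{\Lb_\varphi}$ and $\al(\Lb_\varphi)=0$, from which the three conditions on $\varphi$ follow by the same translation.

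Thus the argument is essentially formal once Corollary \ref{c43} is in hand; the only point requiring care is to invoke the correct characterization of separability of $\Lb_\varphi(0,\ii)$ and to compute its fundamental function. Both are standard references already quoted in the preamble of Theorem \ref{t54}, so no substantive obstacle should arise in carrying out this plan.
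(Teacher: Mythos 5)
Your proposal is correct and follows exactly the paper's route: the paper derives Theorem \ref{t54} by quoting the separability criterion for $\Lb_\varphi(0,\ii)$ (namely $\varphi(+0)=0$ and $\varphi(\ii)=\ii$), the equivalence of separability with order continuity of the norm, and the identity $\varphi_{\Lb_\varphi}=\varphi$, and then applying Corollary \ref{c43} in both directions. There is nothing to add.
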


3. Let $\varphi$ be as above, and let $M_\varphi=(M_\varphi(\Om),\|\cdot\|_{M_\varphi})$ be the corresponding Marcinkiewicz space. As noted in Section 3, $\mb1\notin M_\varphi$ if and only if $\lim\limits_{t \to \ii}\frac{\varphi(t)}t=0$. Thus, the corresponding version of Theorem \ref{t53} holds for Marcinkiewicz space $M_\varphi$ if we replace condition
$\varphi(\ii)= \ii$  by $\lim\limits_{t\to\ii}\frac{\varphi(t)}t=0$.

If $\varphi(+0)>0$ and $\varphi(\ii) < \ii$, then $M_\varphi= L^1$ as the sets. In this case, if $(\Om,\cal A,\mu)$ is quasi-non-atomic, it follows from Theorem \ref{t42} that $M_\varphi\notin (\text{MET})$.

Let $\varphi(+0) = 0$ and $\varphi(\ii) = \ii$. If  $\lim\limits_{t \to 0}\frac{\varphi(t)}t=\ii$, then $M_\varphi$ is non-separable \cite[Ch. II \S 5, Lemma 5.4]{kps}. Consequently, if $(\Om,\cal A,\mu) $ is quasi-non-atomic, then, by Theorem \ref{t43}, $M_\varphi\notin (\text{MET})$.

\end{document}